\documentclass[12pt]{amsart}   
\usepackage[english]{babel}
\usepackage{mathtools}
\usepackage{amssymb,amscd, graphics,color,latexsym,cancel}   
\usepackage[linktoc=all, pagebackref, hyperindex]{hyperref}%
\hypersetup{colorlinks,  citecolor=blue,  filecolor=blue,  linkcolor=blue,  urlcolor=black}

\usepackage{tikz-cd} 
\usepackage{extpfeil}

\usepackage{tikz}
\usetikzlibrary{matrix,calc}
\usepackage{mathrsfs}
\usepackage[all]{xy}

\usepackage{amsfonts}
\usepackage[normalem]{ulem}

\usepackage{euscript}
\usepackage{amsmath}
\usepackage{ifpdf}
\usepackage{cleveref}
\setcounter{MaxMatrixCols}{20}

\LARGE\textwidth=6in
\textheight=21.5cm
\flushbottom
\hoffset=-1.3cm
\voffset=-1.3cm

\newcommand{\rar}{\rightarrow}
\newcommand{\lar}{\longrightarrow}
\newcommand{\surjects}{\twoheadrightarrow}

\newtheorem{Theorem}{Theorem}[section]
\newtheorem{Lemma}[Theorem]{Lemma}
\newtheorem{Corollary}[Theorem]{Corollary}
\newtheorem{Proposition}[Theorem]{Proposition}

\theoremstyle{definition}
\newtheorem{Remark}[Theorem]{Remark}
\newtheorem{Example}[Theorem]{Example}

\newtheorem{Definition}[Theorem]{Definition}
\newtheorem{Question}[Theorem]{Question}
\def\sqr#1#2{{\vcenter{\hrule height.#2pt
			\hbox{\vrule width.#2pt height#1pt \kern#1pt
				\vrule width.#2pt}
			\hrule height.#2pt}}}
\def\phi{\varphi}

\def\VaVa{{\mathcal V}\kern-5pt {\mathcal V}}
\def\gr#1#2{{\rm gr}\, _{#1}(#2)}
\def\gr{{\rm gr}\,}
\def\hht{{\rm ht}\,}
\def\depth{{\rm depth}\,}

\def\Min{{\rm Min}\,}
\def\codim{{\rm codim}\,}
\def\spec{{\rm Spec}\,}

\def\ker{{\rm ker}\,}
\def\grade{{\rm grade}\,}
\def\rk{\rm rank}

\def\sym#1#2{\mbox{\rm Sym}_{#1}(#2)}

\def\Ext#1#2#3#4{{\rm Ext}\,^{#1}_{#2}({#3},{#4})}
\def\spec#1{{\rm Spec}\, (#1)}

\def\supp#1{{\rm Supp}\, (#1)}

\def\ini{\mbox{\rm in}}

\def\Rees{{\mathcal R}}
\def\sym{{\mathrm{Sym}}}
\def\cl#1{{\mathcal #1}}

\def\Rees{\mathcal{R}}

\def\phi{\varphi}
\def\Rees{{\cal R}}
\def\hht{{\rm ht}\,}
\def\grade{{\rm grade}\,}
\def\rt{{\rm rt}\,}

\def\xx{{\bf x}}

\def\fa{{\mathfrak a}}

\def\fm{{\mathfrak m}}
\def\fn{{\mathfrak n}}

\def\fp{{\mathfrak p}}

\def\fm{{\mathfrak m}}
\def\fn{{\mathfrak n}}

\def\cl#1{{\cal #1}}
\def\rk{\rm rank}

\newcommand{\excise}[1]{}

 %
 %
 %
 \def\NZQ{\mathbb}               

 \def\CC{{\NZQ C}}
 \def\AA{{\NZQ A}}
 \def\PP{{\NZQ P}}

 %
 %

 %

 \def\Jc{{\mathcal J}}
 
 \def\G{{\mathcal G}}

\def\Q{{\mathcal Q}}
 %

 %
 \def\opn#1#2{\def#1{\operatorname{#2}}} 
 %
 \opn\chara{char} \opn\length{\ell} \opn\pd{pd} \opn\rk{rk}
 \opn\projdim{proj\,dim} \opn\injdim{inj\,dim} \opn\rank{rank}
 \opn\depth{depth} \opn\grade{grade} \opn\height{height}
 \opn\embdim{emb\,dim} \opn\codim{codim}
 \def\OO{{\mathcal O}}
 \opn\Tr{Tr} \opn\bigrank{big\,rank}
 \opn\superheight{superheight}\opn\lcm{lcm}
 \opn\trdeg{tr\,deg}
 \opn\reg{reg} \opn\lreg{lreg} \opn\ini{in} \opn\lpd{lpd}
 \opn\size{size} \opn\sdepth{sdepth}
 \opn\link{link}\opn\fdepth{fdepth}\opn\lex{lex}
 \opn\tr{tr}
 \opn\type{type}
 %
 \opn\div{div} \opn\Div{Div} \opn\cl{cl} \opn\Cl{Cl}
 %
 %
 \opn\Spec{Spec} \opn\Supp{Supp} \opn\supp{supp} \opn\Sing{Sing}
 \opn\Ass{Ass} \opn\Min{Min}\opn\Mon{Mon}
 \opn\Ho{H}
 %
 %
 \opn\Ann{Ann} \opn\Rad{Rad} \opn\Soc{Soc}
 %
 %
 \opn\Im{Im} \opn\Ker{Ker} \opn\Coker{Coker} \opn\Am{Am}
 \opn\Hom{Hom} \opn\Tor{Tor} \opn\Ext{Ext} \opn\End{End}
 \opn\Aut{Aut} \opn\id{id}
 
 \opn\nat{nat}
 \opn\pff{pf}
 \opn\Pf{Pf} \opn\GL{GL} \opn\SL{SL} \opn\mod{mod} \opn\ord{ord}
 \opn\Gin{Gin} \opn\Hilb{Hilb}\opn\sort{sort}
 \opn\PF{PF}\opn\Ap{Ap}
 %
 %
 \opn\aff{aff} \opn
 \con{conv} \opn\relint{relint} \opn\st{st}
 \opn\lk{lk} \opn\cn{cn} \opn\core{core} \opn\vol{vol}  \opn\inp{inp} \opn\nilpot{nilpot}
 \opn\link{link} \opn\star{star}\opn\lex{lex}\opn\set{set}
 \opn\width{wd}
 \opn\Fr{F}
 \opn\QF{QF}
 \opn\G{G}
 \opn\type{type}\opn\res{res}
 \opn\log{Log}
 \opn\gr{gr}
 \def\Rees{{\mathcal R}}
 %
 %
 
 \def\pot#1#2{#1[\kern-0.28ex[#2]\kern-0.28ex]}

 %
 %
 \opn\dirlim{\underrightarrow{\lim}}
 \opn\inivlim{\underleftarrow{\lim}}
 %
 %
 %

 %

\begin{document}

\title[the relation type of varieties]{the relation type of varieties}
\author{Maryam Akhavin and  Abbas Nasrollah Nejad}
\address{Department of Mathematics, Institute for Advanced Studies in Basic Sciences (IASBS), Zanjan 45137-66731,
	Iran}
\email{maryam.akhavin@gmail.com, abbasnn@iasbs.ac.ir}
\subjclass[2010]{13A30, 14A15, 14B05,  32A025, 32S10}   	
\keywords{Andr\'e-Quillen homology, Jacobi-Zariski exact sequence, Relation type, Blowup algebras, Algebraic variety, Complex analytic variety, Algebroid variety}

\begin{abstract}
In this paper, we introduce the notion of relation type of analytic and formal algebras and prove that it is well-defined and  invariant by describing this notion in terms of the Andr\'e-Quillen homology and using the Jacobi-Zariski long exact sequence of homology. In particular, the relation type is an invariant of schemes of finite type over a field, analytic varieties, and algebroid varieties.  
\end{abstract}

\maketitle
\section*{introduction}
Let $(X,\OO_X)$ be either a scheme of finite type over a field $k$, a complex analytic variety, or an algebroid $k$-variety. We introduce the \textit{relation type of} $X$ to be the maximum of the relation type of the local ring $\OO_{X,x}$ of $X$ at the point $x\in X$, where $\OO_{X,x}$ is either a finitely generated $k$-algebra, an analytic $\mathbb{C}$-algebra, or a formal $k$-algebra, respectively. We show that the relation type of $X$  is an invariant in the categories of schemes of finite type over a field $k$,  complex analytic varieties, or algebroid $k$-varieties. 

A \textit{standard graded $R$-algebra} is a commutative graded algebra $A=\bigoplus_{n\geq 0}A_n$ with $A_0=R$ and $A$ is generated as an $R$-algebra by  some elements of degree $1$. Assume that $A_1$ is generated, as an $R$-module, by the elements $f_1,\ldots, f_m$.  Let $S=R[T_1,\ldots,T_m]$ be a polynomial ring over $R$ with variables $T_1,\ldots,T_m$  and let $\varphi\colon S\surjects A$ be the \textit{polynomial presentation} of $A$ sending $T_i$ to $f_i$. Let $\Q=\bigoplus_{\ell\geq 1}\Q_{\ell}$ stand for the kernel of $\varphi$ whose generators will be referred as the \textit{defining equation}  or \textit{defining ideal} of $A$. 

Let $\Q\left<\ell\right>$ stand for the ideal generated by elements in $\Q$ of degree at most $\ell$.   The \textit{relation type}  of $A$, denoted by $\mathrm{rt}_R(A)$, is the least integer $\ell\geq 1$ such that $\Q\left<\ell\right>=\Q$, i.e., the maximum degree in a minimal generating set of homogeneous elements of $\Q$. Since the isomorphism $S/\Q\cong A$ is graded, an application of the Schanuel Lemma to the graded pieces shows that the notion is independent of the generators of $A_1$. Let $\sym_{R}(A_1)$ stand for the symmetric algebra of the $R$-module $A_1$.  The isomorphism of $R$-algebra $\sym_R(A_1)\cong S/\Q\left<1\right>$ holds and there is a natural $R$-algebra homomorphism $\alpha\colon\sym_R(A_1)\surjects A$, which we call  a \textit{canonical symmetric presentation} of $A$. We denote by $\Jc$ the kernel of $\alpha$ and hence $\Jc=\Q/\Q_1$. 

The relation type of a standard graded $R$-algebra $A$ can be given in terms of the Andr\'e-Quillen homology. This approach was established by F. Planas-Vilanova in~\cite{FV1}. We explain in~\ref{VilanovaApproch} the procedure he employed. In fact, the relation type of $A$ is the minimum positive integer $r\geq 1$ such that the $n$-th graded component of the first Andr\'e-Quillen homology $\Ho_1(R,A,R)_n$ is zero for every $n\geq r+1$.  We also observe that $\rt_R(A)=1$ if and only if the second Andr\'e-Quillen homology $\Ho_2(A,R,R)=0$ (see~\ref{secondAQPresentation}).
The advantage of the definition of the relation type in terms of the  Andr\'e-Quillen homology is that one can prove, using the base change property of the cotangent complex, some algebraic properties such as  the relation type being a local invariant and being stable under the adic-completion which is stated in Proposition~\ref{basechange}. 

Let $R$ be a Noetherian ring and $I$  an ideal of $R$. The two most common and important standard graded  $R$-algebras related to the ideal $I$ are the \textit{Rees algebra} and the \textit{associated graded ring}. Recall that these algebras are defined as 
\[\Rees_{R}(I)=\bigoplus_{n\geq 0}I^tt^n\cong R[It]\subseteq R[t],\,\quad  \gr_{I}(R)=\bigoplus_{n\geq 0}I^n/I^{n+1}. \]

The relation type of $I$, denoted by $\rt_R(I)$, is defined as $\rt_R(I):=\rt_R(\Rees_{R}(I))$. The  ideal $I$ is said to be \textit{of linear type} if $\rt_R(I)=1$.  It is proved by G. Valla that $I$ is of linear type if and only if the canonical symmetric presentation $\beta:\sym_{R/I}(I/I^2)\surjects \gr_{I}(R)$ is an isomorphism~\cite[Theorem 1.3]{Valla}.
This result was generalized by F. Planas-Vilanova~\cite[Proposition 3.3]{FV1} and W. Heinzer, M.K. Kim and B. Ulrich~\cite[Discussion]{HKU}  using the module of effective relations and its relation with the graded Koszul complex and the extended symmetric algebra analogue to the extended Rees algebra, respectively. Namely, for an ideal $I\subseteq R$
\[
\mathrm {rt}_R(I)=\mathrm {rt}_R(\Rees_R(I))=\mathrm{rt}_{R/I}(\gr_{I}(R)).
\] 
Therefore, one can  deduce at once that the top degree equations in  the defining ideal of $\Rees_R(I)$ are in correspondence with the top degree equations of the defining ideal of $\gr_{I}(R)$. In Proposition~\ref{reproved} we reprove this result by using the downgrading homomorphism introduced by J. Herzog, A. Simis and W. Vasconcelos~\cite[Page 471]{H.S.V}. 

The main question emerges as to whether  the relation type of an ideal $I$ is 
invariant of the quotient ring $R/I$. This question is valid for polynomial 
rings over a field~\cite[Theorem 3.1]{FV-RT}. Then the relation type is an 
invariant for affine $k$-algebras and affine algebraic varieties. We prove 
that the invariance question is valid for formal power series rings over a field, 
convergence power series rings over a complete valued field, regular local rings, 
complete  local Noetherian rings and  algebraic power series rings. 
Therefore, the relation type is an invariant  in the categories of  formal $k$-algebras, analytic $k$-algebras ($k$ is a complete valued field) and  the category of  Nash $k$-algebras (see Theorem~\ref{T1},\,  Corollary~\ref{cor-vilanova},\, Corollary~\ref{cor-vilanova2} and Remark~\ref{Nashrings}). In Lemma~\ref{k-iso} we generalize~\cite[Exercise 13, Chapter V, §5]{kunz} to formal and analytic $k$-algebras, which indeed  has a crucial role in  the proof of Theorem~\ref{T1}.

The final section of the paper is devoted to the definition of relation type of varieties. Let $X$ be an affine $k$-scheme of finite type. We define the relation type of $X$ as the relation type of coordinate ring of $X$ which is a finitely generated $k$-algebra. This leads us to define the relation type of a scheme of finite type over $k$ as the maximum of the relation type of the local ring $\OO_{X,x}$ at any point $x\in X$. We prove that any regular $k$-scheme of finite type and any reduced closed subscheme of $\AA_k^2$ is of relation type one ( see Proposition~\ref{regular} and Example~\ref{reducedclosesubchemedim2}). Geometrically, if $X$ is an affine $k$-scheme of finite type and $\rt (X)=1$, then the blowup of the affine ambient space along the defining ideal of $X$ is just a naive blowup that arises from the symmetric algebra. t is an interesting question whether the relation type of $X$ measures the complexity of the blowup.    In Proposition~\ref{a.c.i} we characterize equidimensional strict almost complete intersection closed subscheme of dimension one in $\AA_k^n$ of relation type one. We show that there do not exist a finite set of points in the projective plane and an irreducible affine space curve of relation type $2$ (see Example~\ref{spacecurve} and Proposition~\ref{points}). 

Similarly, we can define the notion of relation type of a complex space germ,  a complex analytic variety, a formal germ, and an algebroid $k$-varieties( see Definition~\ref{spacegerms} and Definition~\ref{algebroid}). Finally, Proposition~\ref{analyticinvariant} shows that the relation type is an analytic and formal invariant of a scheme of finite type over $\mathbb{C}$. 
 \section*{acknowledgment}
The authors are heartily indebted to the late Ernest Kunz for his kind assessment regarding parts of the paper. The authors would like to thank  Francesc Planas-Vilanova for helpful discussions on the subject of the paper. The first author thanks the Iranian National Elites Foundation for the financial supports and the Institute for Advanced Studies in Basic Sciences for the hospitality during this study.  
\section{the andr\'e-quillen homology and the relation type}
Let $\phi\colon A\rightarrow B$ be a morphism of rings and let $N$ be a $B$-module. Assume that $\mathcal{L}_{\phi}$ is the cotangent complex of $\phi$. The $i$-th \textit{Andr\'{e}-Quillen homology} of $B$ over $A$ with coefficients in $N$, denoted by $\Ho_i(A,B,N)$, is the $i$-th homology module of $\mathcal{L}_{\phi}\otimes_BN$, that is uniquely defined in the derived category of the category of $B$-modules. Moreover, $\Ho_0(A,B,N) \cong \Omega_{B/A}\otimes_BN$ where  $\Omega_{B/A}$  is the  module of K{\"a}hler differentials of the $A$-algebra $B$. If $\phi$ is surjective, then $\Ho_1(A,B,N)\cong  I/I^2\otimes_B N$ where $I=\ker\phi$. 

Let $A\xrightarrow{\phi}B\xrightarrow{\psi} C$   be a diagram of rings and  homomorphism of rings.  There is an exact sequence of  triangle $$\mathcal{L}_{\phi}\otimes_BC\rightarrow\mathcal{L}_{\psi\phi}\rightarrow\mathcal{L}_{\psi}\rightarrow\textstyle\sum^1\mathcal{L}_{\phi}\otimes_BC, $$ in the derived category of the category of $C$-modules, where  $\textstyle\sum$ denotes the suspension functor.  The long exact sequence 
\[\begin{array}{ccccccccccc}
\cdots&\kern-9pt\rar&\kern-8pt\Ho_{n+1}(B,C,N)&\kern-8pt\rar\kern-8pt&\Ho_n(A,B,N )&\kern-8pt\rar\kern-8pt&\Ho_n(A,C,N )&\kern-8pt\rar\kern-8pt&\Ho_n(B,C,N )&\kern-8pt\rar\kern-8pt&\cdots\\
&&&\cdots\rar\kern-8pt& \Omega_{B\slash A}\otimes_BN&\kern-9pt\rar&\kern-11pt\Omega_{C\slash A}\otimes_CN&\kern-13pt\rar&\kern-11pt\Omega_{C\slash B}\otimes_CN&\kern-13pt\lar\kern-8pt&0,
\end{array}  \]
of  Andr\'{e}-Quillen homologies associated to the above triangle is called the \textit{Jacobi-Zariski} exact sequence. For more details we refer to~\cite{andre,quillen1,quillen2}.

Based on the work of F. Planas-Vilanova in~\cite{FV1} there is a relationship between the relation type of a standard graded algebra and  Andr\'{e}-Quillen homology. Quite generally, we refine upon this approach for the standard graded algebras.    

\claim{}\label{zeroAQ}\rm 
Let $A$ be a standard graded $R$-algebra. Let $A_+$ stand for the irrelevant ideal of $A$. The Jacobi-Zariski exact sequence associated to  $R\rar A\surjects R\cong A/A_+$ gives the isomorphism
\begin{equation}\label{formula1}
\Ho_{i}(A,R,R)\cong \Ho_{i-1}(R,A,R).
\end{equation}
In particular, we have the isomorphism of $R$-modules
\begin{equation}\label{formula2}
A_1\cong A_+/A_+^2\cong \Ho_{1}(A,R,R)\cong \Ho_{0}(R,A,R)\cong \Omega_{A/R}\otimes_A R\cong \Omega_{A/R}/A_+\Omega_{A/R}.
\end{equation}
Therefore, $$\Ho_0(R,\sym_R(A_1),R)\cong\Ho_0(R,A,R)\cong A_1.$$ 
\claim{}\label{firstAQ}\rm 
Let $A$ and $B$ be two standard graded $R$-algebras and $\phi\colon A\surjects B$ be a surjective graded $R$-algebra homomorphism.  Setting  $\mathfrak{a}\coloneqq\ker(\phi)$.
One has 
\[\Ho_1(A,B,R)\cong \mathfrak{a}/\mathfrak{a}^2\otimes_B R\cong \mathfrak{a}/A_+\mathfrak{a}.\]
We can consider $\Ho_1(A,B,R)$ as a graded $A$-module 
\begin{equation}\label{formula3}
\Ho_1(A,B,R)=\bigoplus_{n\geq 1}\Ho(A,B,R)_n=\bigoplus_{n\geq 1} (\mathfrak{a}/A_+\mathfrak{a})_n=\bigoplus_{n\geq 1}\mathfrak{a}_n/A_1\mathfrak{a}_{n-1}.
\end{equation}
\claim{}\label{firstAQPresentation}\rm 
Let $A'\stackrel{\phi}{\surjects} A\surjects B$ be a diagram of surjective graded morphisms  of standard graded $R$-algebras such that $A'=\sym_R(A'_1)$ and $A=\sym_R(A_1)$.  By  the Jacobi-Zariski exact sequence, we have  the following graded exact sequence
\[\Ho_1(A',A,R) \rar \Ho_1(A',B,R) \rar \Ho_1(A,B,R) \rar0.\]
Formula~\eqref{formula3} together with  $\ker\phi=(\ker\phi)_1A'$ implies that $\Ho_1(A',A,R)_n=0$ for all $n\geq 2$. We conclude that   $\Ho_1(A,B,R)_n\cong\Ho_n(A',B,R)$ for all $n\geq 2$. 
In particular,
$$\Q_n/(R[T_1,\ldots,T_n])_1\Q_{n-1}\cong \Jc_n/\sym_R^1(A_1)\Jc_{n-1},$$
where $\Q$ and $\Jc$ are the kernels of  the polynomial and the canonical symmetric presentations of a standard graded $R$-algebra $A$. 
\claim{}\label{secondAQ}\rm
Let $A$ be a standard graded $R$-algebra. By~\cite[Proposition 15.12]{andre}, there exists an exact sequence 
\begin{equation}\label{syzgetic}
0\rar \Ho_2(A,R,R) \rar H_1\otimes_A R\rar A^m\otimes_A R\rar A_+\otimes_A R\rar 0,
\end{equation}
where $H_1$ is the first Koszul homology module  on the generators $f_1,\ldots, f_m$ of $A_+$. On the other hand, by the results of A. Simis~\cite{Aron}(see also~\cite{Aron-Wolmer}), we obtain that
\[\delta(A_+):=\ker(H_1\rar A^m\otimes_A R)=\ker(\sym_A^2(A_+)\rar A_+^2), \] 
which is an invariant of $A_+$. Therefore, the exact sequence~\eqref{syzgetic} implies that 
\[\Ho_2(A,R,R)\cong \delta(A_+). \]
Another proof of the latter, using the five terms of a spectral sequence,  is given by D. Quillen in~\cite[Chapter II, 6.10]{quillen1}. 
If $A=\sym_R(A_1)$, then by~\cite[Example 2.3]{H.S.V}  $\sym(A_1)_+^n\cong A_+^n$ for all $n\geq 1$ implying that  $\Ho_2(A,R,R)=0=\Ho_1(R,A,R)$. 
\claim{}\rm\label{VilanovaApproch} 
Let $A$ be a standard graded $R$-algebra. The Jacobi-Zariski exact sequence associated to $R\rar S:=\sym_R(A_1)\surjects A$ with coefficients in the $A$-module $R$ gives rise to 
\[\cdots\rar \Ho_1(R,S,R)\rar \Ho_1(R,A,R)\rar \Ho_1(S,A,R)\rar \Ho_0(R,S,R)\rar \cdots.  \]
By~\eqref{secondAQ}, one has $\Ho_1(R,S,R)=0$ also  by~\eqref{zeroAQ}, $\Ho_0(R,S,R)=\Ho_0(R,A,R)$. Using the above exact sequence we get the graded isomorphism of $R$-algebras 
\[\Ho_1(R,A,R)\cong \Ho_1(S,A,R)\cong\bigoplus_{n\geq 2}\Jc_n/S_1\Jc_{n-1}\] 

\claim{}\label{secondAQPresentation}\rm 
Let $A'$ and $A$ be two standard graded $R$-algebras and $\phi\colon A'\surjects A$ be a surjective graded $R$-algebra homomorphism. By the Jacobi-Zariski exact sequence associated to the ring homomorphisms 
$A'\surjects A\surjects R\cong A'/A'_+\cong A/A_+$, one gets the exact sequence 
\begin{equation}
\Ho_2(A',R,R)\rar\Ho_2(A,R,R)\rar\Ho_1(A',A,R)\rar\Ho_1(A',R,R)\rar\Ho_1(A,R,R)\rar 0 
\end{equation}
If $\phi$ is the canonical  symmetric presentation of $A$, then by~\eqref{secondAQ}  $\Ho_2(A',R,R)=0$.  
By~\eqref{zeroAQ}, \eqref{firstAQ} and~\eqref{firstAQPresentation}, we have the exact sequence 
\begin{equation}\label{vanishinH2}
0\rar \Ho_2(A,R,R)\rar \bigoplus_{n\geq 1}\Jc_n/\sym_R^1(A_1)\Jc_{n-1}\rar A_1\rar A_1\rar 0,
\end{equation}
which shows that 
\[ \Ho_2(A,R,R)\cong\bigoplus_{n\geq 2}\Jc_n/\sym_R^1(A_1)\Jc_{n-1}.\]
Therefore, $A$ is  of the relation type  one if and only if $\Ho_2(A,R,R)=0$. 
\subsection{The relation type of standard graded algebras}
Motivating from the above discussion we define the relation type of a standard graded $R$-algebra in terms of the first and the second Andr\'{e}-Quillen homologies. 
\begin{Definition}\label{defRT}
	Let $A$ be a standard graded $R$-algebra.The relation type of $A$ is defined as follow\[\mathrm {rt}_R(A)\coloneqq \min \{ \ell \geq1 \ \colon \ \Ho_2(A,R,R)_n=\Ho_1(R,A,R)_n=0\ \mbox{for}\ n\geq \ell+1 \}.\]
\end{Definition}
\begin{Remark}
	Let $k$ be a field. The relation type of a standard $k$-algebra $A=k[x_1,\ldots, x_n]\slash I$  earlier defined by W.~V.~Vasconcelos, in~\cite[Definition 2.7]{vasconcelos1}, to be the least integer $d$ such that $I=(I_1,\ldots, I_d)$, where $I\subseteq(x_1,\ldots, x_n)^2$ is a homogeneous ideal of the polynomial ring $k[x_1,\ldots, x_n]$ and $I_i$ is the $i$-th graded component  of $I$. The definition~\ref{defRT} coincides with of that Vasconcelos. Let $A$ be a  Cohen-Macaulay standard graded $k$-algebra. By~\cite[Prposition  2.8]{vasconcelos1}, we have \[\mathrm{rt}_k(A)\leq \mathrm{r}(A)+1=\mathrm{reg}(A)+1,\] where $\mathrm{r}(A)$ is the reduction number of $A$ (see~\cite[Definition 2.1]{vasconcelos1}) and $\mathrm{reg}(A)$ is the Costelnuovo-Mumford regularity of $A$.
\end{Remark}
The following result collects the basic algebraic properties for the relation type of standard graded algebras. 
\begin{Proposition}\label{basechange}
	Let $A$ be a standard graded $R$-algebra. Then the following statements hold.
	\begin{enumerate}
		\item[\rm(a)] If $\phi\colon R\rightarrow S$ is a surjective ring homomorphism, then	$\mathrm{rt}_S(A\otimes_RS)\leq\mathrm{rt}_R(A)$;
		\item[\rm(b)] The relation type of $A$ is a local invariant, i.e,. \[ \mathrm{rt}_R(A)=\max \{ \mathrm{rt}_{R_{\fp}}(A_{\fp}) \ \colon \fp\in\spec R\}; \]
		In particular, \[ \mathrm{rt}_R(A)=\max \{ \mathrm{rt}_{R_{\fm}}(A_{\fm}) \ \colon \fm\in\mathrm{Max}( R)\}; \]
		\item[\rm(c)] If the ideal $\mathfrak{a}$ is  contained in the Jacobson radical of $R$, then
		\[\mathrm{rt}_{\widehat{R}}(\widehat{A})=\mathrm{rt}_R(A).\]
		Here, $\widehat{R}$ and $\widehat{A}$ are the $\fa$-adic completions of $R$ and $A$, respectively; 
		\item[\rm(d)] Let $A$ and $B$ be  two standard graded $R$-algebras.	If $\Tor_1^R(A,B)=0$, then
		\[
		\rt_R(A\otimes_RB)=\max\{\mathrm{rt}_R(A),\mathrm{rt}_R(B)\}.
		\]
	\end{enumerate}
\end{Proposition}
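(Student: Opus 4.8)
The plan is to read all four statements off the homological description of the relation type, using the behaviour of the cotangent complex under base change on the ground ring. By Definition~\ref{defRT} together with the degree-preserving isomorphism $\Ho_2(A,R,R)_n\cong\Ho_1(R,A,R)_n$ of~\eqref{formula1}, one has $\mathrm{rt}_R(A)=\min\{\ell\ge1:\Ho_1(R,A,R)_n=0\text{ for all }n\ge\ell+1\}$, and by~\ref{VilanovaApproch} the module $\Ho_1(R,A,R)$ is positively graded and (for $R$ Noetherian) finitely generated and concentrated in finitely many degrees. So in each item it suffices to control the graded pieces $\Ho_1(R,A,R)_n$. The basic tool is that $\mathcal L_{A/R}$ is represented by a complex of graded-free $A$-modules, so $\Ho_n(R,A,N)$ is graded for graded $N$ and is compatible with flat base change on $R$.

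For (b) and (c) I would apply flat base change directly. Since $R\to R_\fp$ is flat, $\Tor_i^R(A,R_\fp)=0$ for $i>0$, hence $\mathcal L_{A_\fp/R_\fp}\simeq\mathcal L_{A/R}\otimes_A A_\fp$, and reducing modulo the irrelevant ideal gives $\Ho_1(R_\fp,A_\fp,R_\fp)_n\cong\Ho_1(R,A,R)_n\otimes_R R_\fp$ for all $n$. Since a graded $R$-module is zero if and only if all its localizations at primes (equivalently, at maximal ideals) vanish, this yields (b), including the assertion that the maximum is attained. For (c), the same base change along the flat map $R\to\widehat R$ gives $\Ho_1(\widehat R,\widehat A,\widehat R)_n\cong\Ho_1(R,A,R)_n\otimes_R\widehat R$, with $\widehat A=A\otimes_R\widehat R$ the $\fa A$-adic completion of $A$; as $\Ho_1(R,A,R)_n$ is finitely generated and $\fa$ lies in the Jacobson radical, Nakayama gives $\Ho_1(R,A,R)_n\otimes_R\widehat R=0\Leftrightarrow\Ho_1(R,A,R)_n=0$, whence $\mathrm{rt}_{\widehat R}(\widehat A)=\mathrm{rt}_R(A)$.

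For (a) the map $\phi$ is surjective, not flat, so only one inequality survives. Writing $A=R[T_1,\dots,T_m]/\Q$ and $I=\ker\phi$, base change gives $A\otimes_R S=S[T_1,\dots,T_m]/\overline\Q$, where $\overline\Q$ is the image of $\Q$ under the surjection $R[T_1,\dots,T_m]\twoheadrightarrow S[T_1,\dots,T_m]$; this is a polynomial presentation of the standard graded $S$-algebra $A\otimes_R S$, and $\overline\Q$ is generated by the images of a generating set of $\Q$ with the $T$-degrees unchanged, hence in degrees $\le\mathrm{rt}_R(A)$, so $\mathrm{rt}_S(A\otimes_R S)\le\mathrm{rt}_R(A)$. (Homologically: in the Jacobi-Zariski sequences of $R\to A\to A\otimes_R S$ and $R\to S\to A\otimes_R S$ with coefficients in $S$, the kernel of $A\twoheadrightarrow A\otimes_R S=A/IA$ is generated in degree $0$, so $\Ho_1(A,A\otimes_R S,S)$ vanishes in positive degrees, while $\Ho_1(R,A,S)_n\cong\Ho_1(R,A,R)_n\otimes_R S$ for $n\ge2$ by a universal-coefficient argument.)

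Item (d) is the one that needs the most care, since $R\to B$ is neither flat (so the base change of (b),(c) does not apply) nor surjective (so the method of (a) does not apply). For "$\ge$": the augmentation $B\twoheadrightarrow B/B_+=R$ gives a graded $R$-algebra retraction $A\hookrightarrow A\otimes_R B\twoheadrightarrow A$ (and symmetrically $B\hookrightarrow A\otimes_R B\twoheadrightarrow B$), so by functoriality of Andr\'e-Quillen homology $\Ho_1(R,A,R)$ and $\Ho_1(R,B,R)$ are graded direct summands of $\Ho_1(R,A\otimes_R B,R)$; thus a degree in which the latter vanishes is one in which both of the former vanish, giving $\mathrm{rt}_R(A\otimes_R B)\ge\max\{\mathrm{rt}_R(A),\mathrm{rt}_R(B)\}$. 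For "$\le$": from presentations $A=R[\mathbf T]/\Q^A$ and $B=R[\mathbf U]/\Q^B$ one has $A\otimes_R B=R[\mathbf T,\mathbf U]/(\Q^A+\Q^B)R[\mathbf T,\mathbf U]$, and this ideal is generated in degrees $\le\max\{\mathrm{rt}_R(A),\mathrm{rt}_R(B)\}$. The hypothesis $\Tor_1^R(A,B)=0$ is precisely what is needed to obtain both inequalities simultaneously, via the splitting $\Ho_1(R,A\otimes_R B,R)\cong\Ho_1(R,A,R)\oplus\Ho_1(A,A\otimes_R B,R)$ above and the base-change isomorphism $\Ho_1(A,A\otimes_R B,R)\cong\Ho_1(R,B,R)$ coming from $\mathcal L_{(A\otimes_R B)/A}$ agreeing with $\mathcal L_{B/R}\otimes_B(A\otimes_R B)$ in low degrees.
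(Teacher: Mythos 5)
Your argument is correct. For (a)--(c) it is essentially the paper's proof: (a) is the induced surjection $\Q_n/R[T_1,\ldots,T_m]_1\Q_{n-1}\surjects\tilde{\Q}_n/S[T_1,\ldots,T_m]_1\tilde{\Q}_{n-1}$ coming from the surjection of presentation ideals (your ``images of a generating set'' phrasing is the same computation; the parenthetical universal-coefficient remark is the one shaky spot, since $\Ho_1(R,A,S)$ and $\Ho_1(R,A,R)\otimes_RS$ differ by a $\Tor$ term, but your main argument does not rely on it), while (b) and (c) are flat base change for the cotangent complex followed by localization, respectively by faithful flatness/Nakayama, exactly as in the paper. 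The genuine divergence is (d): the paper disposes of it in one line by citing Andr\'e's Proposition 19.3, which under $\Tor_1^R(A,B)=0$ yields $\Ho_1(R,A\otimes_RB,R)\cong\Ho_1(R,A,R)\oplus\Ho_1(R,B,R)$, whereas you prove the two inequalities separately --- ``$\geq$'' from the graded retractions $A\to A\otimes_RB\to A$ and $B\to A\otimes_RB\to B$, which make $\Ho_1(R,A,R)_n$ and $\Ho_1(R,B,R)_n$ direct summands of $\Ho_1(R,A\otimes_RB,R)_n$, and ``$\leq$'' from the presentation $A\otimes_RB\cong R[\mathbf{T},\mathbf{U}]/(\Q^A+\Q^B)R[\mathbf{T},\mathbf{U}]$, whose kernel identification uses only the flatness of $R[\mathbf{U}]$ over $R$. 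Both steps are sound, and your route buys something the paper's does not: neither inequality actually invokes $\Tor_1^R(A,B)=0$, so your argument proves (d) unconditionally. Your closing sentence, asserting that the hypothesis is ``precisely what is needed,'' therefore mischaracterizes your own proof --- the hypothesis is needed only for the Andr\'e-type splitting of $\Ho_1$, which your two separate inequalities render unnecessary.
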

\begin{proof}
	$\rm(a)$  Let $\Q$ and $\tilde{\Q}$ be the kernels of polynomial presentations $R[T_1,\ldots, T_m]\surjects A$ and  $S[T_1,\ldots, T_m]\surjects A\otimes_RS$ of the standard graded $R$-algebra $A$ and the standard graded $S$-algebra $A\otimes_RS$, respectively. The natural homomorphism $\Q\surjects\Q\otimes_RS\surjects\tilde{\Q}$ of graded $R$-modules,  for $n\geq 2$ induces 
	$$\Q_n\slash R[T_1,\ldots, T_m]_1{\Q}_{n-1}\surjects{\tilde{\Q}}_n\slash S[T_1,\ldots, T_m]_1{\tilde{\Q}}_{n-1}.$$ Therefore there is an exact sequence $\Ho_1(R, A ,R)_n\rightarrow \Ho_1(S,A\otimes_RS,S)_n\rightarrow 0$ of $R$-modules which implies that $\mathrm{rt}_S(A\otimes_RS)\leq\mathrm{rt}_R(A)$.
	
	$\rm(b)$  By the base change property~\cite[6.2]{Iyen}, $\mathcal{L}_{i}\otimes_RR_{\fp}$ and $\mathcal{L}_{i_{\fp}}$ are homotopic, where $\mathcal{L}_{i}$ and $\mathcal{L}_{i_{\fp}}$ are the cotangent complexes of the ring homomorphisms $R\xrightarrow{i} A$ and $R_{\fp}\xrightarrow{ i_{\fp}}A_{\fp}$, respectively. Therefore,    the isomorphism of $R$-modules 
	\[\Ho_1(R,A,R)\otimes_RR_{\fp}\cong\Ho_1(R_{\fp},A_{\fp},R_{\fp})\] 
	 holds which implies the assertions.  
	
	$\rm(c)$ Note first that $\widehat{A}$ is a standard graded $\widehat{R}$-algebra. By a similar argument as in (b), we have
	$\Ho_1(R,A,R)\otimes_R\widehat{R}\cong \Ho_1(\widehat{R},\widehat{A},\widehat{R})$. Since $\widehat{R}$ is faithfully flat, one easily obtains  that $\mathrm{rt}_R(A)=\mathrm{rt}_{\widehat{R}}(\widehat{A})$.\\
	
	$\rm(d)$ It is easy to see that $A\otimes_RB$ is a standard graded $R$-algebra. Since $\Tor_1^R(A,B)=0$, one may apply ~\cite[Proposition 19.3]{andreSimplicial} to get
	\[\Ho_1(R,A\otimes_RB,R)\cong \Ho_1(R,A,R)\oplus\Ho_1(R,B,R).
	\]
	The claim is now immediately follows from the definition.   
\end{proof}

\begin{Example}
	Let $A$ be a cyclic standard graded $R$-algebra generated by $f\in A_1$. Consider $\phi$ to be the polynomial presentation of $A$. Then $\mathcal{Q}_n=(0:_Rf^n)T^n \subseteq R[T]$. One may easily obtain that  $\mathcal{Q}_{n+1}=(R[T])_1\mathcal{Q}_n$ if and only if $(0:_Rf^{n+1})=(0:_Rf^n)$. One has
	\[
	\mathrm{rt}_R(A)=\min\{n\geq 1\ \colon \  (0:_Rf^{n+1})=(0:_Rf^n) \}.
	\]
\end{Example}

\begin{Example}
	Let $S=R[x_1,\ldots, x_n]$ be a polynomial ring over a ring $R$. Denote  by $\mathrm{Mon}^d(S)$ the set of all monomials of degree $d$ in $S$. Note that $N=\mid \mathrm{Mon}^d(S)\mid=
	\binom{d+n-1}{n-1}$. Assume that $A=R[\mathrm{Mon}^d(S)]$ is the standard graded $R$-algebra generated by $ \mathrm{Mon}^d(S)$. Consider the polynomial presentation $\phi\colon R[T_1,\ldots, T_N]\rar A$ sending $T_i$ to the $i$-th monomial of degree $d$ in lexicographic order. Write $m_1,\ldots, m_r$ for the monomials of degree $d-1$ in $S$ in lexicographic order where $r=\binom{d+n-2}{n-1}$. Let $\mathbf{M}$ be a metric of size $n\times r$, whose $(i,j)$-th entry is the variable $T_k$ such that $\phi(T_k)=x_im_j$. By~\cite[Proposition 2.5]{jan-katz}, we have $\mathcal{Q}=I_2(\mathbf{M})$. Then $\mathrm{rt}_R(A)=2$. 
\end{Example}
We give the following example of a family of standard graded algebra over a field with an unbounded relation type. 
\begin{Example}
Let $k$ be a field and $S=k[x_1,\ldots,x_d]$. Consider the following monomials in $S$
\[ f_1:=x_1^2\prod_{i=3}^{d}x_i,\, f_2:=x_1^2\prod_{i=2}^{d-1}x_i,\, f_{j+1}:=x_1^3 \frac{ \prod_{i=2}^{d}x_i}{x_jx_{j+1}},\, f_{d+1}:=x_1\cdots x_d \quad\hbox{for}\ j=2,\ldots, d-1 .\]
 For $d$ even number, let $A_1$ be a standard graded $k$-algebra generated by $f_1,f_2, \ldots, f_d$ and for $d$ odd number let  $A_2$ be a standard graded $k$-algebra generated by $f_1,\dots,f_d,f_{d+1}$. Then 
$A_1\cong k[T_1,\ldots,T_d]/(F_1)$ and $A_2\cong k[T_1,\ldots,T_{d+1}]/(F_2)$, where 
\[F_1=T_{d-1}\prod_{i=1}^{d\slash 2-1}T_{2i}-T_d\prod_{i=1}^{d\slash 2-1}T_{2i-1},\, F_2= T_{d}\prod_{i=1}^{(d-1)\slash 2}T_{2i}-T_{d+1}\prod_{i=1}^{(d-1)\slash 2}T_{2i-1}. \]
Then  $\mathrm{rt}_k(A_1)=d\slash 2$ and $\rt_k(A_2)=(d+1)/2$.
\end{Example}
\subsection{The relation type of an ideal}
Let $R$ be a commutative Noetherian ring and $I$ be an ideal of $R$. The relation type of $I$ is defined to be $\mathrm {rt}_R(I)\coloneqq\mathrm {rt}_R(\Rees_R(I))$. An ideal of relation type one is called of linear type (see for example~\cite{H.S.V}).

The surjective $R$-algebra homomorphism $\alpha\colon\sym_R(I)\surjects \Rees_R(I)$ induces the surjective $R/I$-algebra homomorphism $\beta\colon \sym_{R/I}(I/I^2)\surjects \gr_{I}(R)$, where $\gr_{I}(R)=\Rees_R(I)\otimes_R R/I\simeq \Rees_R(I)/I\Rees_R(I)$ is the associated graded algebra of $I$. By~\cite[Theorem 1.3]{Valla} (see also a geometric proof~\cite[Theorem V1.3]{Keel}) $\alpha$ is an isomorphism if and only if $\beta$ is an isomorphism. 
In terms of the relation type we have that $\mathrm{rt}_R(I)=1$ if and only if $\mathrm{rt}_{R/I}(\gr_{I}(R))=1$ which is generalized by 
F. Planas-Vilanova~\cite[Proposition 3.3]{FV1} and W. Heinzer, M.K. Kim and B. Ulrich~\cite[Discussion]{HKU}. Namely, for an ideal $I\subset R$ the relation type of the Rees algebra and the associate graded ring of $I$ are equal.  
We reprove this fact  by using the downgrading homomorphism 
\[\lambda\colon \sym_R^{n+1}(I)\lar \sym_R^n(I),\quad\lambda(b_1.b_2\cdots b_{n+1})=b_1(b_2.b_3\cdots b_{n+1})\in\sym_R^n(I),  \] 
introduced by J. Herzog, A. Simis and W. Vasconcelos~\cite[Page 471]{H.S.V}. Note that $\lambda$ is well-defined and  it dose not make a difference which element is pulled out of a monomial product.  

\begin{Proposition}\label{reproved}
	Let $R$ be a Noetherian ring and $I$ be an ideal of $R$. Then 
	\[\mathrm{rt}_R(\Rees_R(I))=\mathrm{rt}_{R/I}(\gr_{I}(R)). \]
\end{Proposition}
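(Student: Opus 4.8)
The plan is to prove the two inequalities $\rt_R(\Rees_R(I))\le\rt_{R/I}(\gr_I(R))$ and $\rt_{R/I}(\gr_I(R))\le\rt_R(\Rees_R(I))$ separately. The second one is immediate: since $R\surjects R/I$ is surjective and $\gr_I(R)\cong\Rees_R(I)\otimes_R R/I$, Proposition~\ref{basechange}(a) gives $\rt_{R/I}(\gr_I(R))=\rt_{R/I}(\Rees_R(I)\otimes_R R/I)\le\rt_R(\Rees_R(I))$. For the first inequality, note that forming the Rees algebra, the associated graded ring and the canonical symmetric presentations all commute with localization, so by Proposition~\ref{basechange}(b) it suffices to treat the case where $(R,\fm)$ is Noetherian local; we may moreover assume $I\subseteq\fm$, since otherwise $\Rees_R(I)=R[t]$ is a symmetric algebra and both relation types equal $1$.

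So let $S:=\sym_R(I)$, let $\alpha\colon S\surjects\Rees_R(I)$ be the canonical symmetric presentation with kernel $\Jc=\bigoplus_{n\ge1}\Jc_n$, and let $\bar S:=\sym_{R/I}(I/I^2)=S\otimes_R R/I$ with presentation $\bar\alpha\colon\bar S\surjects\gr_I(R)$ and kernel $\bar\Jc=\bigoplus_{n\ge1}\bar\Jc_n$. The first step is to read everything off the downgrading map $\lambda\colon\sym_R^{n+1}(I)\to\sym_R^n(I)$. Because $\lambda$ extracts one factor of a product and leaves it acting as a scalar of $R$, its image in degree $n$ is exactly $I\sym_R^n(I)$, so the cokernel of $\lambda$ in degree $n$ is $\sym_R^n(I)/I\sym_R^n(I)\cong\sym_{R/I}^n(I/I^2)$, which recovers $\bar S=S\otimes_R R/I$ degree by degree. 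On the Rees side the operator corresponding to $\lambda$ is the inclusion $I^{n+1}t^{n+1}\injects I^n t^n$ (multiplication by $t^{-1}$); since this inclusion is injective, chasing the commutative square $\alpha\circ\lambda=(\text{inclusion})\circ\alpha$ gives $\lambda(\Jc_{n+1})=\Jc_n\cap I\sym_R^n(I)$, and hence the image of $\Jc_n$ under $\sym_R^n(I)\surjects\sym_{R/I}^n(I/I^2)$ is precisely $\bar\Jc_n$. Feeding these identifications into the descriptions of the relation type through the first Andr\'e-Quillen homology recalled in~\ref{VilanovaApproch} and~\ref{secondAQPresentation} (together with Definition~\ref{defRT}), the hypothesis $r:=\rt_{R/I}(\gr_I(R))$ translates into the family of identities $\Jc_n=\sym_R^1(I)\Jc_{n-1}+\lambda(\Jc_{n+1})$ for all $n>r$, and this is what we bootstrap.

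Put $\Jc':=\Jc\cap IS$, an ideal of the Noetherian ring $S$, so that $\Jc'_n=\lambda(\Jc_{n+1})=\Jc_n\cap I\sym_R^n(I)$. Using that $\lambda$ sends $\sym_R^1(I)\cdot\Jc_n$ onto $I\Jc_n$ and is $R$-linear, one checks that for $n>r$ the identity above propagates to $\Jc'_n=I\Jc_n+\lambda(\Jc'_{n+1})$, and then, by induction on $k$, to $\Jc'_n=I\Jc_n+\lambda^k(\Jc'_{n+k})$ for every $k\ge1$; since $\Jc'_{n+k}\subseteq I\sym_R^{n+k}(I)$ we get $\lambda^k(\Jc'_{n+k})\subseteq I^{k+1}\sym_R^n(I)$, whence $\Jc'_n\subseteq I\Jc_n+I^{k+1}\sym_R^n(I)$ for all $k$. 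Applying the Krull intersection theorem to the finitely generated $R$-module $\sym_R^n(I)/I\Jc_n$ yields $\Jc'_n\subseteq I\Jc_n$, so that for $n>r$ we have $\Jc_n=\sym_R^1(I)\Jc_{n-1}+\Jc'_n=\sym_R^1(I)\Jc_{n-1}+I\Jc_n$. Thus the finitely generated $R$-module $\Ho_1(R,\Rees_R(I),R)_n=\Jc_n/\sym_R^1(I)\Jc_{n-1}$ equals $I$ times itself, and since $I\subseteq\fm$, Nakayama's lemma forces it to vanish for every $n>r$; that is, $\rt_R(\Rees_R(I))\le r$, which combined with the reverse inequality proves the proposition. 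The main obstacle is the bootstrap: one must carefully track how $\lambda$ interacts with both the grading product and the scalar action of $R$ on $\sym_R(I)$ — in particular the reason for passing to $\Jc'=\Jc\cap IS$ rather than working with $\Jc$ directly is precisely to make the induction $\Jc'_n=I\Jc_n+\lambda^k(\Jc'_{n+k})$ close without circularity; only once that is in place do the Krull intersection and Nakayama inputs dispatch the top-degree relations.
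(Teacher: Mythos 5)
Your proof is correct, and its skeleton coincides with the paper's: the easy inequality $\rt_{R/I}(\gr_I(R))\leq\rt_R(\Rees_R(I))$ via Proposition~\ref{basechange}(a), and then the Herzog--Simis--Vasconcelos downgrading map $\lambda$ together with the key identity $\lambda(\Jc_{n+1})=I\sym_R^n(I)\cap\Jc_n$ and the identification of the kernel of $\beta_n$ with the image of $\Jc_n$ in $\sym^n_{R/I}(I/I^2)$ are exactly the paper's tools. Where you genuinely diverge is the endgame. The paper packages these facts into the exact sequence $\Ho_1(R,\Rees_R(I),R)_{n+1}\xrightarrow{\bar\lambda}\Ho_1(R,\Rees_R(I),R)_n\rar\Ho_1(R/I,\gr_I(R),R/I)_n\rar 0$, so that for $n>r:=\rt_{R/I}(\gr_I(R))$ the map $\bar\lambda$ is surjective, and then concludes by descending from the eventual vanishing $\Ho_1(R,\Rees_R(I),R)_N=0$ for $N\gg 0$ (the relation type of $\Rees_R(I)$ is a priori finite because $\Jc$ is a finitely generated ideal of the Noetherian ring $\sym_R(I)$). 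You instead localize, pass to $\Jc'=\Jc\cap I\sym_R(I)$, and run a Krull-intersection/Nakayama bootstrap to force $\Jc_n=\sym_R^1(I)\Jc_{n-1}$ directly for $n>r$. Both finishes are valid: the paper's is shorter and stays global, while yours pays for the reduction to the local case (needed for both Krull intersection and Nakayama) and a more delicate induction, but in exchange makes fully explicit the mechanism the paper compresses into ``it immediately follows'' and avoids invoking the finiteness of $\rt_R(\Rees_R(I))$ as an external input.
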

\begin{proof}
	By Lemma~\eqref{basechange}(a), we have $\mathrm{rt}_{R/I}(\gr_{I}(R))\leq \mathrm{rt}_R(\Rees_{R}(I))$. It is enough to show that the other side of  the inequality holds. Let $n\geq 1$. Assume \[\lambda\colon\sym_{R}^{n+1}(I)\rar\sym_{R}^{n}(I)\]  to be  the  downgrading $R$-homomorphism as it defined in the above discussion. Let $\mathcal{J}$ be the kernel of the canonical presentation $\alpha\colon\sym_R(I)\surjects \Rees_R(I)$. It follows that $\lambda(\mathcal{J}_{n+1})\subseteq  \mathcal{J}_n $ and since $\lambda$ does not make a difference which element is pulled out of the monomial product, one has $\lambda(\sym^1_R(I).\mathcal{J}_n)\subseteq\sym^1_R(I).\mathcal{J}_{n-1}$, implying that we  have the following exact sequence of $R$-modules.
	\[
	\mathcal{J}_{n+1}\slash \sym^1_R(I).\mathcal{J}_n\xrightarrow{\bar{\lambda}}\mathcal{J}_n\slash \sym^1_R(I).\mathcal{J}_{n-1}\rar\Coker\bar{\lambda}\rar 0.
	\]
	Furthermore, since $\lambda(\mathcal{J}_{n+1})=I\sym^n_R(I)\cap\mathcal{J}_n$ then 
	\[
	\Coker(\bar{\lambda}_n)=\mathcal{J}_n\slash I\sym_R^n(I)\cap\mathcal{J}_n+\sym_R^1(I).\mathcal{J}_{n-1}.
	\] 
	Consider the commutative diagram
	\[
	\begin{tikzcd}
	0 \arrow[r] & \mathcal{J}_n \arrow[d, "
	"] \arrow[r, " "] & \sym_R^n(I) \arrow[d, ""] \arrow[r, "\alpha_n"] & \Rees_{R}(I)_n \arrow[d, ""] \arrow[r] & 0 \\
	0 \arrow[r] & \mathcal{L}_n \arrow[r, " "] &\sym_{R\slash I}^n(I\slash I^2)\arrow[r, "\beta_n"] & \gr_{I}(R)_n \ar[r] & 0\\
	\end{tikzcd}
	\]
	where $\beta$ stands for the canonical presentation. Then  $\mathcal{L}_n=\mathcal{J}_n\slash\lambda(\mathcal{J}_{n+1})$. It is now easy to see that the natural homomorphism $\phi_n\colon \sym_{R\slash I}^1(I\slash I^2).\mathcal{L}_{n-1}\rar\mathcal{L}_n$ is well defined and $\Im\phi_n=\sym_R^1(I).\mathcal{J}_{n-1}+\lambda({\mathcal{J}_{n+1}})\slash\lambda({\mathcal{J}_{n+1}})$. Hence 
	\[
	\mathcal{L}_n\slash \sym_{R\slash I}^1(I\slash I^2).\mathcal{L}_{n-1} =\mathcal{L}_n\slash\Im\phi_n\cong\mathcal{J}_n\slash I\sym_R^n(I)\cap\mathcal{J}_n+\sym_R^1(I).\mathcal{J}_{n-1}= \Coker\bar{\lambda}_{n}.
	\]
	Therefore, 
	\[
	\Ho_1(R,\Rees_R(I), R)_{n+1}\xrightarrow{\bar{\lambda}}\Ho_1(R,\Rees_R(I),R)_n\rar\Ho_1(R\slash I,\gr_I(R),R\slash I)_n\rar 0
	\]
	is an exact sequence of $R$-modules. It immediately follows that 
	$\mathrm{rt}_{R\slash I}(\gr_{I}(R))\geq\mathrm{rt}_R(\Rees_R(I))$, as wanted.
\end{proof}

\begin{Corollary}\label{rtA=rtA_+}
	Let $A$ be a standard graded $R$-algebra. Then
	\[
	\mathrm{rt}_A(A_+)=\mathrm{rt}_R(A),
	\]
	where $A_+$ stands for the irrelevant ideal of $A$.
\end{Corollary}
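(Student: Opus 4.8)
The plan is to reduce the statement to Proposition~\ref{reproved} by identifying the associated graded ring of $A$ with respect to its irrelevant ideal. By definition $\mathrm{rt}_A(A_+)=\mathrm{rt}_A(\Rees_A(A_+))$, and Proposition~\ref{reproved}, applied to the ideal $A_+$ of $A$, gives
\[
\mathrm{rt}_A(A_+)=\mathrm{rt}_{A/A_+}\bigl(\gr_{A_+}(A)\bigr).
\]
Since $A_0=R$ we have $A/A_+\cong R$, so everything comes down to producing an isomorphism of graded $R$-algebras $\gr_{A_+}(A)\cong A$.

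To obtain that isomorphism I would first exploit standardness: because $A$ is generated in degree one, $A_+=A_1A$ and hence $A_+^n=A_1^nA=\bigoplus_{m\geq n}A_m$ for every $n\geq 0$. In particular $A_n\subseteq A_+^n$ while $A_n\cap A_+^{n+1}=0$, so sending a homogeneous element of $A_n$ to its residue class in $A_+^n/A_+^{n+1}$ is an $R$-module isomorphism in each degree $n$. This family of maps is multiplicative, since the product of a degree-$n$ element and a degree-$m$ element lies in $A_{n+m}$ and its class is the product of the two classes; collecting them yields a graded $R$-algebra isomorphism $A\xrightarrow{\ \sim\ }\gr_{A_+}(A)$ fixing $A_0=R$. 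Substituting into the displayed equality gives $\mathrm{rt}_A(A_+)=\mathrm{rt}_R(A)$.

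The argument is quite short; its only delicate point — and the step I would treat carefully — is the identification $\gr_{A_+}(A)\cong A$, which relies essentially on $A$ being standard graded, so that the powers of the irrelevant ideal are exactly the truncations $\bigoplus_{m\geq n}A_m$. (For a merely positively graded $A$ this identification, and hence the conclusion, need not hold.) One could alternatively bypass Proposition~\ref{reproved} and argue with the Jacobi--Zariski sequence for $A\rar \Rees_A(A_+)\surjects \gr_{A_+}(A)$ directly, but routing through Proposition~\ref{reproved} together with the graded isomorphism above is the cleanest path.
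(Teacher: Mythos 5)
Your proposal is correct and follows essentially the same route as the paper: the paper's proof is exactly the chain $\mathrm{rt}_A(A_+)=\mathrm{rt}_{A/A_+}(\gr_{A_+}(A))=\mathrm{rt}_R(R[A_1])=\mathrm{rt}_R(A)$ via Proposition~\ref{reproved} together with the identification $\gr_{A_+}(A)\cong A$ coming from $A_+^n=\bigoplus_{m\geq n}A_m$. You have merely spelled out the graded isomorphism that the paper leaves implicit, which is a welcome addition rather than a deviation.
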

\begin{proof}
	The claim is an immediate consequence of Proposition~\ref{reproved}. Indeed,  
	\[	
	\mathrm{rt}_A(A_+)=\mathrm{rt}_{A\slash A_+}(\gr_{ A_+}(A/A_+))=\mathrm{rt}_R(R[A_+\slash A_+^2])=\mathrm{rt}_R(R[A_1])=\mathrm{rt}_R(A).
	\]
\end{proof}

\section{the invariance of the relation type }\label{invariant}
In this section, we deal with the following question is due to Jos\'e M. Giral (see~\cite{FV-RT}, Page 178). 
\begin{Question}\label{RTinvaraiant}
Is the relation type of an ideal an invariant of the quotient ring?
\end{Question}\label{Qinva}
For an ideal in a polynomial ring the above question is answered~\cite[Theorem 3.1]{FV-RT}. We investigate the invariance of the relation type for formal $k$-algebras,  analytic $k$-algebras ($k$ is a complete valued field), regular local rings and complete Noetherian local rings. We will also give a counter example to the above question (see Example~\ref{counterexample}) 
 
Let $k[[\mathbf{x}]]\coloneqq k[[x_1,\ldots,x_n]]$ be a ring of \textit{formal power series} in $n$ indeterminates $x_1,\ldots,x_n$ over a field $k$. The ring  $k[[\mathbf{x}]]$ is a Noetherian regular local ring with the maximal ideal $\fm:=(x_1,\ldots,x_n)k[[\mathbf{x}]]$ and  complete for the $\fm$-adic topology. Moreover, it is the completion of the polynomial ring $k[x_1,\ldots,x_n]$ for  $\fn:=\fm\cap k[x_1,\ldots,x_n]$-adic topology. Also,  it is the completion of $k[x_1,\ldots,x_n]_{\fn}$ for  $\fn k[x_1,\ldots,x_n]$-adic topology~\cite[Ch. VII, § 1]{ZSII}. A $k$-algebra  is called a \textit{formal $k$-algebra} if it is isomorphic to $k[[\mathbf{x}]]/I$ for some ideal $I\subseteq k[[\mathbf{x}]]$. A morphism of formal $k$-algebra is a morphism of $k$-algebra which is automatically local.  

Let $k$ be a complete valued field. We denote by $k\{\mathbf{x}\}\coloneqq k\{x_1,\ldots,x_n\}$ the subring of all \textit{convergent power series} in $k[[x_1,\ldots,x_n]]$. The ring $k\{\mathbf{x}\}$ is a Noetherian regular local ring with the maximal ideal $\fm=(x_1,\ldots,x_n)k\{\mathbf{x}\}$. The completion of $k\{\mathbf{x}\}$ with respect to the $\fm$-adic topology is $k[[\mathbf{x}]]$. A $k$-algebra is called an \textit{analytic $k$-algebra} if it is isomorphic to $k\{\mathbf{x}\}/I$ for some ideal $I\subseteq k\{\mathbf{x}\}$ .

In what follows, we work with the ring of formal power series $k[[\mathbf{x}]]$ over an arbitrary field $k$, and also with the ring of convergence power series $k\{\mathbf{x}\}$ where $k$ is a complete valued field. To avoid of repetition, we denote by $k\left<\mathbf{x}\right>$ either $k[[\mathbf{x}]]$ or  $k\{\mathbf{x}\}$, where in the latter case $k$ is a complete valued field. 

The following lemma that serves as our starting point is indeed a generalization of~\cite[Exercise 13, Chapter V, §5]{kunz} to the formal and convergence power series. 
\begin{Lemma}\label{k-iso}
	Let $R=k\left<x_1,\ldots,x_n\right>$ and $I$ be an ideal of $R$. Let  $S=k\left<y_1,\ldots,y_m\right>$ and $J$ be an ideal of $S$. Suppose that there exists an isomorphism of $k$-algebras $A:=R/I\cong S/J$. Then there is an isomorphism of $A$-algebras 
	$$\gr_{I}(R)\left[y_1,\ldots,y_m\right]\cong \gr_{J}(S)\left[x_1,\ldots,x_n\right].$$
\end{Lemma}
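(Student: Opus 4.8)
The plan is to realise both $\gr_I(R)[Y_1,\dots,Y_m]$ and $\gr_J(S)[X_1,\dots,X_n]$ as the associated graded ring of \emph{one and the same} ideal in the joint power series ring $T:=k\left<x_1,\dots,x_n,y_1,\dots,y_m\right>$, which we identify with $R\left<y_1,\dots,y_m\right>$ and with $S\left<x_1,\dots,x_n\right>$. First I build the ideal from the $R$-side. Since a morphism of formal (resp.\ analytic) $k$-algebras is local, the image $\overline{y_j}\in S/J$ lies in the maximal ideal, hence so does $a_j:=\phi^{-1}(\overline{y_j})\in R/I=A$; choose lifts $g_j\in\fm_R$. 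Substituting $y_j\mapsto g_j$ defines a surjection $\psi\colon T\surjects R$ whose kernel, after the change of variables $y_j\mapsto y_j-g_j$, is seen to be $(y_1-g_1,\dots,y_m-g_m)$. Put $\mathfrak U:=\psi^{-1}(I)$, so that $T/\mathfrak U\cong R/I=A$ and $\mathfrak U=IT+(y_1-g_1,\dots,y_m-g_m)$. Symmetrically, lifting $\phi(\overline{x_i})\in S/J$ to $h_i\in\fm_S$ and substituting $x_i\mapsto h_i$ yields $\psi'\colon T\surjects S$ with $\mathfrak U':=(\psi')^{-1}(J)=JT+(x_1-h_1,\dots,x_n-h_n)$ and $T/\mathfrak U'\cong S/J\cong A$.

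The key point is that $\mathfrak U=\mathfrak U'$. Consider the substitution $\Phi\colon T\to A=R/I$ with $x_i\mapsto\overline{x_i}$ and $y_j\mapsto a_j$. On one hand $\Phi$ is the composite $T\xrightarrow{\psi}R\surjects R/I$, so $\mathfrak U=\psi^{-1}(I)=\ker\Phi$. On the other hand, since $h_i$ lifts $\phi(\overline{x_i})$ and $g_j$ lifts $\phi^{-1}(\overline{y_j})$, one checks on generators that $\Phi$ also equals the composite $T\xrightarrow{\psi'}S\surjects S/J\xrightarrow{\phi^{-1}}R/I$, so $\mathfrak U'=(\psi')^{-1}(J)=\ker\Phi$ as well. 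Hence $\mathfrak U=\mathfrak U'=\ker\Phi$.

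Now I normalise and compute. The automorphism of $T$ fixing the $x_i$ and sending $y_j\mapsto y_j+g_j$ carries $\mathfrak U$ onto $\mathfrak c:=IT+(y_1,\dots,y_m)T$, and symmetrically an automorphism carries $\mathfrak U'$ onto $\mathfrak c':=JT+(x_1,\dots,x_n)T$; therefore
\[\gr_{\mathfrak c}(T)\cong\gr_{\mathfrak U}(T)=\gr_{\mathfrak U'}(T)\cong\gr_{\mathfrak c'}(T)\]
as graded $A$-algebras, where one checks that the $A$-algebra structure transported through each automorphism coincides with the intrinsic one coming from the degree-zero part $T/\mathfrak c\cong A\cong T/\mathfrak c'$. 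It then remains to identify $\gr_{\mathfrak c}(T)$. Here $T/\mathfrak c=R/I$ and $\mathfrak c^{\,n}=\sum_{p+q=n}I^{p}(y_1,\dots,y_m)^{q}T$, and the $R/I$-algebra map $\gr_I(R)[Y_1,\dots,Y_m]\to\gr_{\mathfrak c}(T)$ sending, for $r\in I^{p}$ and $|\alpha|=q$, the element $\bar r\,Y^{\alpha}$ to the class of $r\,\mathbf y^{\alpha}$ in $\mathfrak c^{\,p+q}/\mathfrak c^{\,p+q+1}$ is well defined and surjective; it is injective because $T/(y_1,\dots,y_m)^{n+1}T$ is a free $R$-module with basis the monomials $\mathbf y^{\alpha}$, $|\alpha|\le n$ (equivalently, $y_1,\dots,y_m$ is a regular sequence on $T$), so matching $\mathbf y^{\alpha}$-coefficients in a relation lying in $\mathfrak c^{\,n+1}$ forces every such $r$ into $I^{p+1}$. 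Thus $\gr_{\mathfrak c}(T)\cong\gr_I(R)[Y_1,\dots,Y_m]$, and symmetrically $\gr_{\mathfrak c'}(T)\cong\gr_J(S)[X_1,\dots,X_n]$, with $Y_j$ (resp.\ $X_i$) the initial form of $y_j$ (resp.\ $x_i$); combined with the display this is the desired isomorphism of $A$-algebras.

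I expect the crux to be the coincidence $\mathfrak U=\mathfrak U'$ of the second paragraph: recognising that the ideal of $T$ built from the $R$-side and the one built from the $S$-side are literally the same ideal is exactly where the hypothesis $R/I\cong S/J$ enters, and everything else is formal bookkeeping. The only genuine technical input afterwards is the regular-sequence property of $y_1,\dots,y_m$ in the power series ring $T$ (here one uses that $y_1,\dots,y_m$ extends to a regular system of parameters of the regular local ring $T$); if one prefers to deduce the computation of $\gr_{\mathfrak c}(T)$ from the polynomial-ring case $R[\mathbf y]$, one instead uses that $R\left<\mathbf y\right>$ is flat over $R[\mathbf y]$, which holds because $R$ is Noetherian.
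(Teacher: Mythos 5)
Your proof is correct, and its skeleton is the same as the paper's: both pass to the joint ring $T=k\left<\mathbf{x},\mathbf{y}\right>$, both work with the ideals $IT+(\mathbf{y})T$ and $JT+(\mathbf{x})T$, and both identify the associated graded rings of these with $\gr_I(R)[\mathbf{y}]$ and $\gr_J(S)[\mathbf{x}]$ respectively. Where you genuinely diverge is in how the two key steps are executed. For the identification $\gr_{IT+(\mathbf{y})T}(T)\cong\gr_I(R)[\mathbf{y}]$, the paper invokes the Valabrega--Valla criterion together with Kunz's Lemma 5.3 and a tensor manipulation, whereas you give a direct coefficient argument resting on the freeness of $T/(\mathbf{y})^{n+1}T$ over $R$; your version is more self-contained and, incidentally, sidesteps a step in the paper's tensor computation that is stated rather loosely. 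For matching the two sides, the paper appeals to the lifting lemmas of Greuel--Lossen--Shustin to produce an automorphism $\bar{\phi}$ of $T$ with $\bar{\phi}(\mathfrak{a})=\mathfrak{b}$; you instead observe that the two un-normalised preimage ideals $\mathfrak{U}$ and $\mathfrak{U}'$ are literally equal, both being the kernel of the single substitution map $\Phi\colon T\to A$, and then reduce to the normalised ideals by explicit translation automorphisms $y_j\mapsto y_j\pm g_j$, $x_i\mapsto x_i\pm h_i$. The composite of your translations is exactly the automorphism the lifting lemma would supply, so in effect you have reproved the special case of that lemma needed here. What your route buys is independence from the two external references (and it makes the compatibility of the $A$-algebra structures transparent, since everything is pinned to the one map $\Phi$); what the paper's route buys is brevity at the cost of citing machinery.
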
 

\begin{proof}
	Let $\mathbf{x}\coloneqq x_1,\ldots,x_n$ and $\mathbf{y} \coloneqq y_1, \ldots, y_m$. Consider the ring  $T\coloneqq k\left<\mathbf{x},\mathbf{y}\right>$ and the ideal $\mathfrak{a}\coloneqq IT+(\mathbf{y})T$. Using the $k$-algebra homomorphism $$\phi\colon T\surjects R $$ with $\phi(f(\mathbf{x},\mathbf{y}))=f(\mathbf{x},0)$, we obtain that $\phi(\mathfrak{a})=I$. It is also straightforward to see that $\ker\phi=(\mathbf{y})T$. Since $y_1,\ldots,y_m$ is $T/IT$-regular, it follows that $(\mathbf{y})T\cap (IT)^n=(\mathbf{y})T(IT)^n\subseteq (\mathbf{y})T\mathfrak{a}^{n-1}$ for every $n\geq 1$. Hence $(\mathbf{y})T\cap \mathfrak{a}^n=(\mathbf{y})T\cap (IT)^n+(\mathbf{y})T (IT)^{n-1}\subseteq (\mathbf{y})T\mathfrak{a}^{n-1}$. By~\cite[Theorem 1.1]{VaVa}, $\ker(\varphi)^*=(y_1^*,\ldots,y_m^*)$ 	where $y_i^*$ is the residue class of $y$ in $\mathfrak{a}\slash \mathfrak{a}^2$. Now
	~\cite[Lemma 5.3]{kunz} shows that 
	
	\begin{equation*}
		\gr_\mathfrak{a}(T)\slash(y_1^*,\ldots,y_m^*)\cong\gr_I(R), 
	\end{equation*}
	Therefore,
	\begin{equation*}
		\gr_{\mathfrak{a}}(T)\slash(y_1^*,\ldots,y_m^*)\otimes_AA[\mathbf{y}]\cong\gr_I(R)\otimes_AA[\mathbf{y}].
	\end{equation*}
	Regarding that $\gr_{\mathfrak{a}}(T)$ is an $A[y_1,\ldots,y_m]$-module, we now have
	\begin{align*}
		\gr_{\mathfrak{a}}(T)\slash(y_1^*,\ldots,y_m^*)\otimes_AA[\mathbf{y}]&\cong\gr_{\mathfrak{a}}(T)\otimes_{A[\mathbf{y}]}A[\mathbf{y}]\slash(\mathbf{y})\otimes_AA[\mathbf{y}]\\&\cong\gr_{\mathfrak{a}}(T)\otimes_{A[\mathbf{y}]}A\otimes_{A}A[\mathbf{y}]\\&\cong\gr_{\mathfrak{a}}(T).
	\end{align*}
	Hence, we obtain the following isomorphism of $A$-algebras $$\gr_{\mathfrak{a}}(T)\cong\gr_I(R)[y_1,\ldots,y_m].$$ Using a similar argument as above for the ideal  $\mathfrak{b}\coloneqq JT+(\mathbf{x})T$ one has $\gr_{\mathfrak{b}}(T)\cong \gr_J(S)[x_1,\ldots,x_n]$.
	By the lifting Lemmas,~\cite[Lemma 1.14 and Lamma 1.27]{GLS}, we now have the following commutative diagram of $k$-algebras and $k$-homomorphisms
	$$
	\begin{tikzcd}[column sep=small]
k\left<\mathbf{x},\mathbf{y}\right> \arrow{r}{\bar{\phi}}  \arrow{d}{\pi} 
	&k\left<\mathbf{x},\mathbf{y}\right>\arrow{d}{\pi'} \\
	R\slash I\cong k\left<\mathbf{x},\mathbf{y}\right>\slash \mathfrak{a} \arrow{r}{{\phi}}& k\left<\mathbf{x},\mathbf{y}\right>\slash \mathfrak{b} \cong S\slash J
	\end{tikzcd}
	$$
	where $\phi$ and $\bar{\phi}$ are isomorphisms, and $\bar{\phi}(\mathfrak{a})=\mathfrak{b}$.  Apply ~\cite[Lemma 5.3]{kunz} for the isomorphism $\bar{\phi}$, we obtain that
	$ \gr_\mathfrak{a}(T)\cong\gr_{\mathfrak{b}}(T), $
	which completes the proof. 
\end{proof}
We now turn toward the main result. 
\begin{Theorem}\label{T1}
	Let $R=k\left<x_1,\ldots,x_n\right>$ and $I$ be an ideal of $R$. Let  $S=k\left<y_1,\ldots,y_m\right>$ and $J$ be an ideal of $S$.
	 Assume that there exists an isomorphism of $k$-algebras $R/I\cong S/J$. 	Then $\mathrm{rt}_R(I)=\mathrm{rt}_S(J)$. 
\end{Theorem}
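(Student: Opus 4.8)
The plan is to reduce the statement to a comparison of associated graded rings, where Lemma~\ref{k-iso} already carries the geometric content, and then to note that adjoining polynomial variables leaves the relation type unchanged. Set $A:=R/I$ and identify it with $S/J$ via the given isomorphism. First I would invoke Proposition~\ref{reproved} to pass from the Rees algebra to the associated graded ring:
\[
\mathrm{rt}_R(I)=\mathrm{rt}_R(\Rees_R(I))=\mathrm{rt}_{R/I}(\gr_I(R))=\mathrm{rt}_A(\gr_I(R)),
\]
and in the same way $\mathrm{rt}_S(J)=\mathrm{rt}_A(\gr_J(S))$, where in the second chain one transports the standard graded $S/J$-algebra $\gr_J(S)$ along the isomorphism $S/J\cong A$; this only relabels the defining equations and so does not affect the relation type. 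Hence it suffices to prove $\mathrm{rt}_A(\gr_I(R))=\mathrm{rt}_A(\gr_J(S))$.

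Next I would isolate the auxiliary fact that for any standard graded $A$-algebra $C$ and finitely many indeterminates $\mathbf{z}=z_1,\dots,z_p$ placed in degree one, $\mathrm{rt}_A(C[\mathbf{z}])=\mathrm{rt}_A(C)$. This follows quickly from the machinery already in place: $C[\mathbf{z}]\cong C\otimes_A A[\mathbf{z}]$ as standard graded $A$-algebras; $A[\mathbf{z}]$ is a free, hence flat, $A$-module, so $\Tor_1^A(C,A[\mathbf{z}])=0$; and $A[\mathbf{z}]=\sym_A(A^p)$ is a symmetric algebra, so $\mathrm{rt}_A(A[\mathbf{z}])=1$ by~\ref{secondAQ} and~\ref{secondAQPresentation}. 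Proposition~\ref{basechange}(d) then gives $\mathrm{rt}_A(C[\mathbf{z}])=\max\{\mathrm{rt}_A(C),1\}=\mathrm{rt}_A(C)$, the last equality because the relation type is by definition at least $1$.

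Finally, Lemma~\ref{k-iso} supplies an isomorphism of $A$-algebras $\gr_I(R)[y_1,\dots,y_m]\cong\gr_J(S)[x_1,\dots,x_n]$, which becomes an isomorphism of standard graded $A$-algebras once the adjoined variables are placed in degree one. Applying the auxiliary fact to each side yields
\[
\mathrm{rt}_A(\gr_I(R))=\mathrm{rt}_A\big(\gr_I(R)[\mathbf{y}]\big)=\mathrm{rt}_A\big(\gr_J(S)[\mathbf{x}]\big)=\mathrm{rt}_A(\gr_J(S)),
\]
and combining with the first step gives $\mathrm{rt}_R(I)=\mathrm{rt}_S(J)$. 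I expect the only delicate point to be the bookkeeping of gradings: one must check that the isomorphism of Lemma~\ref{k-iso} is compatible with the standard grading in which the new variables sit in degree one, so that Proposition~\ref{reproved} and Proposition~\ref{basechange}(d) — both phrased for standard graded algebras — apply without modification. No genuinely new difficulty should arise beyond what Lemma~\ref{k-iso} has already absorbed.
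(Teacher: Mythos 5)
Your proposal is correct and follows essentially the same route as the paper: both reduce via Proposition~\ref{reproved} to comparing the associated graded rings, invoke Lemma~\ref{k-iso} for the graded isomorphism $\gr_I(R)[y_1,\ldots,y_m]\cong\gr_J(S)[x_1,\ldots,x_n]$, and then argue that adjoining polynomial variables in degree one leaves the relation type unchanged. The only (cosmetic) difference is in that last step: the paper runs the Jacobi--Zariski sequence along $A\to\gr_I(R)\to\gr_I(R)[y_1,\ldots,y_m]$ and uses the vanishing $\Ho_i(\gr_I(R),\gr_I(R)[y_1,\ldots,y_m],A)=0$ for $i\geq 1$, whereas you package the same vanishing into Proposition~\ref{basechange}(d) applied to $C\otimes_A A[\mathbf{z}]$; both are sound.
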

\begin{proof} By Lemma~\ref{k-iso},  we have the following commutative diagram of $k$-algebras
	$$ \begin{tikzpicture}[-stealth,
	label/.style = { font=\footnotesize }]
	\matrix (m)
	[
	matrix of math nodes,
	row sep    = 2em,
	column sep = 2em
	]
	{
		R\slash I & \gr_I(R)[y_1,\ldots, y_m] & R\slash I  \\
		S\slash J & \gr_J(S)[x_1,\ldots, x_n] & S\slash J   \\
	};
	\foreach \i in {1,...,2} {
		\path
		let \n1 = { int(\i+1) } in
		(m-1-\i) edge node [above, label] {} (m-1-\n1)
		(m-2-\i) edge node [below, label] {} (m-2-\n1)
		(m-1-\i) edge node [left,  label] {$\cong$} (m-2-\i);
	}
	\path (m-1-3) edge node [left, label] {$\cong$} (m-2-3);
	\end{tikzpicture}
	$$
	such that each column is an isomorphism of $k$-algebras. Note that the commutativity of the second diagram can easily obtained from the proof of  Lemma~\ref{k-iso}. This implies that 
	$$\Ho_i(R\slash I,\gr_I(R)[y_1,\ldots,y_m], R\slash I)\cong \Ho_i(S\slash J,\gr_J(S)[x_1,\ldots,x_n], S\slash J),  $$
for all $i\geq 0$. 
The ring homomorphisms 
	 $
	R\slash I\rightarrow \gr_I(R)\rightarrow\gr_I(R)[y_1,\ldots,y_m]$ gives rise to the  Jacobi-Zariski exact sequence of
	Andr\'{e}-Quillen homologies 
	\begin{align*}
		\cdots&\rightarrow \Ho_{i+1}(\gr_I(R),\gr_I(R)[y_1,\ldots,y_m], R\slash I)
		\rightarrow \Ho_i(R\slash I,\gr_I(R),R\slash I)\rightarrow\\&\Ho_i(R\slash I,\gr_I(R)[y_1,\ldots,y_m], R\slash I)\rightarrow\Ho_{i}(\gr_I(R),\gr_I(R)[y_1,\ldots,y_m], R\slash I)\rightarrow\cdots.
	\end{align*}
Note also that    $$\Ho_i(\gr_I(R), \gr_I(R)[y_1,\ldots,y_m], R\slash I)=0 $$ for every $i\geq 1$. Therefore 	
$$\Ho_i(R\slash I,\gr_I(R),R\slash I)\cong \Ho_i(R\slash I,\gr_I(R)[y_1,\ldots,y_m], R\slash I). $$
Similarly, we get $$\Ho_i(S\slash J,\gr_J(S),S\slash J)\cong \Ho_i(S\slash J,\gr_J(S)[x_1,\ldots,x_m], R\slash J). $$
Thus
\[\Ho_i(R\slash I,\gr_I(R),R\slash I)\cong \Ho_i(S\slash J,\gr_J(S),S\slash J).  \] 
By the above isomorphism for $i=1$, Definition~\ref{defRT} and Proposition~\ref{reproved}, one has 
\[\rt_R(I)=\rt_{R/I}(\gr_I(R))=\rt_{S/J}(\gr_J(S))=\rt_S(J). \]
\end{proof}
\begin{Corollary}\label{cor-vilanova} Let $(R,\fm)$ and $(S,\fn)$  be  complete Noetherian regular  local rings both containing some fields. Assume furthermore  that $k\coloneqq R\slash \fm\cong S\slash \fn$. Suppose that there is an isomorphism of $k$-algebras 
	$R\slash I\cong S\slash J$ for two ideals $I$ and $J$ of $R$ and $S$, respectively. Then $\mathrm{rt}_R(I)=\mathrm{rt}_S(J)$.
\end{Corollary}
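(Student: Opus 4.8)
The plan is to reduce the statement of Corollary~\ref{cor-vilanova} to Theorem~\ref{T1} by invoking the Cohen structure theorem. Since $(R,\fm)$ is a complete Noetherian regular local ring containing a field, and its residue field is $k$, the Cohen structure theorem provides an isomorphism of $k$-algebras $R\cong k[[x_1,\ldots,x_n]]$, where $n=\dim R=\edim R$. Likewise $S\cong k[[y_1,\ldots,y_m]]$ with $m=\dim S$. These isomorphisms are genuine isomorphisms of (complete, local) $k$-algebras; note that a choice of coefficient field is exactly what lets us view $R$ and $S$ as formal $k$-algebras in the sense of Section~\ref{invariant}.

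Next I would transport the ideals along these isomorphisms. Writing $\theta_R\colon R\xrightarrow{\sim}k[[\mathbf{x}]]$ and $\theta_S\colon S\xrightarrow{\sim}k[[\mathbf{y}]]$, set $I':=\theta_R(I)\subseteq k[[\mathbf{x}]]$ and $J':=\theta_S(J)\subseteq k[[\mathbf{y}]]$. The relation type is manifestly preserved under a ring isomorphism carrying $I$ to $I'$ (the Rees algebras $\Rees_R(I)$ and $\Rees_{k[[\mathbf{x}]]}(I')$ are isomorphic as graded algebras, so their defining ideals have the same top degree), hence $\rt_R(I)=\rt_{k[[\mathbf{x}]]}(I')$ and $\rt_S(J)=\rt_{k[[\mathbf{y}]]}(J')$. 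Moreover the given isomorphism $R/I\cong S/J$ induces, after composing with the isomorphisms $R/I\cong k[[\mathbf{x}]]/I'$ and $S/J\cong k[[\mathbf{y}]]/J'$ coming from $\theta_R,\theta_S$, an isomorphism of $k$-algebras $k[[\mathbf{x}]]/I'\cong k[[\mathbf{y}]]/J'$.

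Now Theorem~\ref{T1}, applied with the notational convention $k\langle\mathbf{x}\rangle=k[[\mathbf{x}]]$, yields $\rt_{k[[\mathbf{x}]]}(I')=\rt_{k[[\mathbf{y}]]}(J')$. Chaining the equalities,
\[
\rt_R(I)=\rt_{k[[\mathbf{x}]]}(I')=\rt_{k[[\mathbf{y}]]}(J')=\rt_S(J),
\]
which is the assertion. I do not foresee a serious obstacle here; the only point requiring a word of care is the invocation of Cohen's structure theorem in the \emph{equicharacteristic} case (the hypothesis that $R$ and $S$ contain a field is exactly what guarantees the existence of a coefficient field and hence a $k$-algebra, rather than merely ring, isomorphism to a formal power series ring), together with the elementary but worth-stating fact that $\rt$ is invariant under $k$-algebra isomorphisms of the ambient regular ring. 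Both are standard, so the corollary is essentially a formal consequence of Theorem~\ref{T1}.
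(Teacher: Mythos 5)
Your proposal is correct and follows exactly the paper's own argument: invoke the equicharacteristic Cohen structure theorem to identify $R$ and $S$ with formal power series rings over $k$, then apply Theorem~\ref{T1}. The extra care you take in transporting the ideals and noting that relation type is preserved under an isomorphism of the ambient ring is a sound elaboration of what the paper leaves implicit.
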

\begin{proof}
	Applying the Cohen structure theorem~\cite[Corollary 2, p.~206]{matsumura-alg}, we have $R\cong k[[x_1,\ldots,x_m]]$ and $S\cong k[[y_1,\ldots,y_n]]$ for some integer $m=\dim R$ and $n=\dim S$. The assertion now follows from Theorem~\ref{T1}.
\end{proof}
Using the following results~\cite[Theorem 3.1]{FV-RT} is now obvious.
\begin{Corollary}\label{cor-vilanova2} Let $(R,\fm)$ and $(S,\fn)$ be regular local rings both containing some fields. Assume further  that $k\coloneqq R\slash \fm\cong S\slash \fn$. Suppose that there is an isomorphism of $k$-algebras 
	$R\slash I\cong S\slash J$ for two ideals $I$ and $J$ of $R$ and $S$, respectively. Then $\mathrm{rt}_R(I)=\mathrm{rt}_S(J)$.
\end{Corollary}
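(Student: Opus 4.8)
The plan is to reduce Corollary~\ref{cor-vilanova2} to the complete case already settled in Corollary~\ref{cor-vilanova}, using that the relation type is stable under adic completion. First I would invoke Proposition~\ref{basechange}(c): since the relation type of an ideal $I$ in $R$ equals $\rt_R(\Rees_R(I))$, and $\Rees_R(I)$ is a standard graded $R$-algebra with the ideal $\mathfrak m$ contained in the Jacobson radical of $R$ (a local ring), we get $\rt_R(I) = \rt_R(\Rees_R(I)) = \rt_{\widehat R}(\widehat{\Rees_R(I)}) = \rt_{\widehat R}(I\widehat R) = \rt_{\widehat R}(\widehat I)$, where $\widehat R$ is the $\mathfrak m$-adic completion and one uses that completion commutes with the Rees algebra construction, i.e. $\widehat{\Rees_R(I)} \cong \Rees_{\widehat R}(I\widehat R)$. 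The same applies to $S$: $\rt_S(J) = \rt_{\widehat S}(\widehat J)$.

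Next I would observe that $\widehat R$ and $\widehat S$ are complete Noetherian regular local rings containing a field (regularity and the containment of a field are preserved under completion, and a regular local ring containing a field has a completion containing a coefficient field by Cohen's theorem), and that their residue fields are unchanged: $\widehat R/\widehat{\mathfrak m} \cong R/\mathfrak m = k \cong S/\mathfrak n \cong \widehat S/\widehat{\mathfrak n}$. The given $k$-algebra isomorphism $R/I \cong S/J$ induces, upon completion, an isomorphism of $k$-algebras $\widehat R/\widehat I \cong \widehat{R/I} \cong \widehat{S/J} \cong \widehat S/\widehat J$ (here one uses that $\widehat{R/I} = \widehat R/I\widehat R$, which holds because $R/I$ is a quotient of the Noetherian local ring $R$ and completion is exact on finitely generated modules). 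Therefore the hypotheses of Corollary~\ref{cor-vilanova} are met for the pair $(\widehat R, \widehat S)$ with ideals $\widehat I, \widehat J$, giving $\rt_{\widehat R}(\widehat I) = \rt_{\widehat S}(\widehat J)$. Combining the three displayed equalities yields $\rt_R(I) = \rt_S(J)$.

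The only genuinely delicate point is making sure that completion interacts correctly with the Rees algebra so that Proposition~\ref{basechange}(c) applies in the form I want. One must check that $\mathfrak m \cdot \Rees_R(I)$ (or rather the image of $\mathfrak m$ in the degree-zero part) lies in the Jacobson radical of $\Rees_R(I)$ — equivalently that $\mathfrak a := \mathfrak m$ pulled into the graded ring is contained in the Jacobson radical — and that the $\mathfrak a$-adic completion of $\Rees_R(I)$ is again the Rees algebra of the completed ideal; this is where a little care with the grading and with the Artin–Rees type argument is needed, though it is standard. Alternatively, one can bypass the Rees algebra entirely by directly applying the André–Quillen description: $\rt_R(I)$ is governed by $\Ho_1(R/I, \gr_I(R), R/I)$ via Proposition~\ref{reproved}, and the base-change property of the cotangent complex under the flat map $R/I \to \widehat{R/I}$, together with faithful flatness, gives $\Ho_i(R/I,\gr_I(R),R/I)\otimes_{R/I}\widehat{R/I} \cong \Ho_i(\widehat{R/I}, \gr_{\widehat I}(\widehat R), \widehat{R/I})$, hence equality of the relation types; this mirrors the proof of Proposition~\ref{basechange}(b)--(c) and is arguably the cleaner route.

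With either route, once the completed setting is reached the work is done: the isomorphism $\widehat R/\widehat I \cong \widehat S/\widehat J$ is a $k$-algebra isomorphism between quotients of complete regular local rings with common residue field $k$, so Corollary~\ref{cor-vilanova} — itself a consequence of Theorem~\ref{T1} via the Cohen structure theorem — closes the argument. I expect no essential obstacle beyond the bookkeeping around completion and the Rees/associated-graded construction.
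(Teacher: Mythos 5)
Your proposal is correct and follows essentially the same route as the paper: complete both rings, note that the $k$-algebra isomorphism $R/I\cong S/J$ induces an isomorphism of the $\fm/I$-adic and $\fn/J$-adic completions, apply Corollary~\ref{cor-vilanova} to get $\rt_{\widehat R}(I\widehat R)=\rt_{\widehat S}(J\widehat S)$, and descend via Proposition~\ref{basechange}(c). The extra care you take with the compatibility of completion and the Rees algebra is a detail the paper leaves implicit, but it does not change the argument.
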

\begin{proof}
	One obtains that the completions of two rings $R/I$ and $S/J$ are isomorphic with respect to $\fm/I$-adic  and $\fn/J$-adic filtrations, respectively. Therefore, $\mathrm{rt}_{\widehat{R}}(I\widehat{R})=\mathrm{rt}_{\widehat{S}}(J\widehat{S})$ by Corollary~\ref{cor-vilanova}. This implies that $\mathrm{rt}_R(I)=\mathrm{rt}_S(J)$ by Proposition~\ref{basechange}(c).
\end{proof}
\begin{Remark}\mbox{}\label{Nashrings}
\begin{enumerate}
\item A formal power series $f\in k[[\mathbf{x}]]$ is called  an algebraic power series if $f$ is algebraic over $k[\mathbf{x}]$.  The set consisting of all algebraic power series is a ring that is denoted by $k\kern-4pt\ll\kern-4pt\mathbf{x}\kern-4pt\gg$. Substituting the ring $R$ and $S$, respectively with  the algebraic power series rings $k\kern-4pt\ll\kern-4pt\mathbf{x}\kern-4pt\gg$ and $k\kern-4pt\ll\kern-4pt\mathbf{y}\kern-4pt\gg$, one may see that  Lemma~\ref{k-iso} and Theorem~\ref{T1} are still valid. Every homomorphic image of an algebraic power series ring $k\kern-4pt\ll\kern-4pt\mathbf{x}\kern-4pt\gg$ is called a \textit{Nash $k$-algebra}. It therefore follows from our discussion that the relation type is an invariant of Nash $k$-algebras.   For more on Nash rings see~\cite{Ruiz}.
\item It is well-known that the power series rings, convergence power series rings, algebraic power series rings and  complete regular local rings are examples of \textit{Henselian rings}. It would be interesting to investigate  Question~\ref*{RTinvaraiant} for the Henselian ring (Thanks to Christopher Chiu).  
\end{enumerate}
\end{Remark}
\begin{Definition}
Let $A=k[[\xx]]/I$  be a formal $k$-algebra. We define the \textit{relation type of $A$} as $\rt(A)=\rt_R(I)$. There is an analogous definition in the case of analytic $k$-algebras and  Nash $k$-algebras. By Theorem~\ref{T1} and  Corollary~\ref{cor-vilanova}, the notion of relation type  is well-defined and it is an invariant in the categories of  formal $k$-algebras, analytic $k$-algebras, complete Noetherian local ring containing a field with the same residue field and  regular local rings. 
\end{Definition}

The following example indicates that  if $R\slash I\cong S\slash J$ and either or both of the rings $R$ and $S$ is non regular ring, then one can not deduce that
$\rt_R(I)=\mathrm{rt}_S(J)$.
\begin{Example}\label{counterexample}
Let $k$ be an algebraically closed field and let $X$ be a non regular integral scheme of finite type over $k$. Hence the regular locus $X_{\rm reg}$ is a non empty open set. Since the set of closed points of $X$ is very dense by~\cite[Proposition 3.33]{Gortz-wed},  there exist $x,y\in X$ such that $x$ is a regular point and $y$ is a non regular point. We also have 
	$$ \mathcal{O}_{X,x}/{\fm_x}\cong\mathcal{O}_{X,y}/{\fm_y}\cong k.$$	Hence $\mathrm{rt}_{\mathcal{O}_{X,x}}(\fm_x)=1$ while  $\mathrm{rt}_{\mathcal{O}_{Y,y}}(\fm_y)>1$ as $y$ is a singular point. 
\end{Example}
We close this section with the following remark. 
\begin{Remark}
Let $A=k\left<x_1,\ldots,x_n\right>\slash I$ be either a formal $k$-algebra or an analytic $k$-algebra. When the relation type of $A$ is at least $2$, the equidimensional components of $I$ of dimension $1$ and $n$ are irrelevant with respect to the relation type. The proof of this statement is analogous to the polynomial case in~\cite[Proposition 4.4]{FV-RT}.    
\end{Remark}
	
\section{the relation type of algebraic, analytic and algebroid varieties}
This section deals with the notion of relation type for $k$-schemes of finite type, analytic and algebroid varieties. 
\subsection{Schemes of finite type over a field}
Let  $X$ be an affine scheme of finite type over a field $k$. We define \textit{the relation type of $X$} as $\mathrm{rt}(X)=\mathrm{rt}(\mathcal{O}_X(X))$ where $\mathcal{O}_X(X)$ is the coordinate ring or the global section of $X$. Since the category of affine schemes of finite type over a field $k$ is anti-equivalent to the category of finitely generated $k$-algebras, by Corollary~\ref{cor-vilanova2}, one can deduce that the relation type of $X$ is well-defined and it is an invariant of affine schemes of finite type over a field $k$. In particular, the relation type is an invariant of affine varieties. 

Let $X$ be a scheme of finite type over a field $k$, i.e., a scheme locally of finite type over $k$ and quasi-compact. By using the local invariant property of the relation type, we define the relation type of $X$ as follow
$$\rt(X)=\max \{\rt(\OO_{X,x}) \ \colon x\in X \}.$$ 
Therefore, the definition of the relation type for $k$-schemes of finite type is well-defined and it is an invariant. 
\begin{Proposition}\label{regular}
	Let $X$ be a regular $k$-scheme of finite type. Then $\rt(X)=1$. 
\end{Proposition}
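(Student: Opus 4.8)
The plan is to reduce the statement to the known fact that a regular local ring has maximal ideal of linear type, together with the local-invariant formulation of the relation type. First I would recall from the definition $\rt(X)=\max\{\rt(\OO_{X,x}):x\in X\}$, so it suffices to prove $\rt(\OO_{X,x})=1$ for every $x\in X$. Since $X$ is locally of finite type over $k$, each local ring $\OO_{X,x}$ is (a localization of) a finitely generated $k$-algebra, and regularity of $X$ means precisely that $\OO_{X,x}$ is a regular local ring. Writing $\OO_{X,x}=R_{\fp}/I$ for a polynomial ring $R$ and prime $\fp$, the relation type $\rt(\OO_{X,x})$ is, by our Definition of the relation type of a formal/algebraic local algebra and by Corollary~\ref{cor-vilanova2}, intrinsic to the ring $\OO_{X,x}$ and does not depend on the chosen presentation.

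Next I would invoke the characterization of relation type one. The relation type of a Noetherian local ring $(\OO,\fm)$ is $\rt_{\OO}(\fm)$, and by the discussion preceding Proposition~\ref{reproved} (Valla's theorem, \cite[Theorem 1.3]{Valla}), $\rt_{\OO}(\fm)=1$ if and only if the canonical map $\sym_{\OO/\fm}(\fm/\fm^2)\surjects\gr_{\fm}(\OO)$ is an isomorphism, equivalently $\fm$ is of linear type. For a regular local ring this is classical: if $x_1,\dots,x_d$ is a regular system of parameters, then $x_1,\dots,x_d$ is a regular sequence, the associated graded ring $\gr_{\fm}(\OO)$ is a polynomial ring over $\OO/\fm$ on the initial forms $x_1^*,\dots,x_d^*$, and this polynomial ring is exactly $\sym_{\OO/\fm}(\fm/\fm^2)$. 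Hence $\fm$ is of linear type and $\rt_{\OO}(\fm)=1$. Concretely, one may also cite that a regular sequence generates an ideal of linear type, so $\Rees_{\OO}(\fm)\cong\sym_{\OO}(\fm)$.

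Putting these together: for each $x\in X$, the ring $\OO_{X,x}$ is regular local, so $\rt(\OO_{X,x})=\rt_{\OO_{X,x}}(\fm_x)=1$. Taking the maximum over all $x\in X$ gives $\rt(X)=1$. (When $X$ is not affine one should note the maximum is still well-defined because $X$ is quasi-compact; when $X$ is affine with coordinate ring $B$, one also has $\rt(B)=\max_{\fm}\rt(B_{\fm})$ by Proposition~\ref{basechange}(b), and each $B_{\fm}$ is regular local, giving the same conclusion directly.)

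The only point requiring care — and the main (if mild) obstacle — is making sure the relation type of the \emph{local ring} $\OO_{X,x}$, viewed as an abstract local $k$-algebra, agrees with the relation type computed from whatever polynomial presentation is at hand; this is exactly the content of the invariance results of Section~\ref{invariant} (Corollary~\ref{cor-vilanova2}) and of Proposition~\ref{basechange}, so no new work is needed. Everything else is the standard fact that regular sequences, and in particular regular systems of parameters, are of linear type.
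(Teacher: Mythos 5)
Your reduction to the local statement and your appeal to Corollary~\ref{cor-vilanova2} for presentation-independence are fine, and the key fact you eventually invoke (a regular sequence generates an ideal of linear type) is exactly the one the paper uses. But there is a genuine error in the middle of your argument: you assert that ``the relation type of a Noetherian local ring $(\OO,\fm)$ is $\rt_{\OO}(\fm)$'' and then prove that $\fm_x$ is of linear type in $\OO_{X,x}$. That is not the definition in this paper. Here $\rt(\OO_{X,x})$ means $\rt_{R_{\fp_x}}(I_{\fp_x})$, the relation type of the \emph{defining ideal} of $\OO_{X,x}\cong R_{\fp_x}/I_{\fp_x}$ inside the localized polynomial ring $R_{\fp_x}$ (this is precisely what Corollary~\ref{cor-vilanova2} makes well-defined); it is not the relation type of the maximal ideal of $\OO_{X,x}$ viewed as an ideal of itself. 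The two quantities are genuinely different: Example~\ref{counterexample} exploits exactly the fact that $\rt_{\OO_{X,x}}(\fm_x)$ is not an invariant of the quotient ring, and Example~\ref{reducedclosesubchemedim2} shows that a singular reduced curve in $\AA_k^2$ has $\rt(X)=1$ even though $\rt_{\OO_{X,x}}(\fm_x)>1$ at each singular point. So establishing $\rt_{\OO_{X,x}}(\fm_x)=1$ does not prove the proposition; it proves a different statement that happens to have the same numerical answer at regular points.

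The fix is short and is what the paper does: at a regular point $x$, both $R_{\fp_x}$ and $\OO_{X,x}=R_{\fp_x}/I_{\fp_x}$ are regular local rings, so by \cite[Proposition 2.2.4]{Herzog.B} the ideal $I_{\fp_x}$ is generated by part of a regular system of parameters of $R_{\fp_x}$, i.e.\ by a regular sequence; since an ideal generated by a regular sequence is of linear type, $\rt(\OO_{X,x})=\rt_{R_{\fp_x}}(I_{\fp_x})=1$, and taking the maximum over $x\in X$ gives $\rt(X)=1$. In short, you applied the linear-type fact to the wrong ideal ($\fm_x$ instead of $I_{\fp_x}$); redirecting it to the defining ideal closes the gap.
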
 
\begin{proof}
	We may assume that $X=\spec{k[x_1,\ldots,x_n]/I}$ is an affine scheme of finite type over $k$. We have 
	\[\mathrm{rt}(X)=\max \{ \mathrm{rt} (\mathcal{O}_{X,x})\ \colon \ x\in X \}. \] 
	If $x\in X$ is a regular point, then  $\mathcal{O}_{X,x}\cong k[x_1,\ldots,x_n]_{p_x}/I_{p_x}$ which is a regular local ring. Therefore, by~\cite[Proposition 2.2.4]{Herzog.B} the ideal $I_{p_x}$ is generated by a regular sequence. This implies that $\mathrm{rt}(\mathcal{O}_{X,x})=\mathrm{rt}(I_{p_x})=1$. It turns out that 
	\[\mathrm{rt}(X)=\max \{ \mathrm{rt} (\mathcal{O}_{X,x})\  \colon\  x\in\Sing (X) \}. \]	
	In particular, If $X$ is  regular, then $\mathrm{rt}(X)=1$. 
\end{proof}

\begin{Example}\label{Zeroschemes} Let $X$ be a zero dimensional reduced affine scheme of finite type over a field $k$. Then $X$ consists of finitely many   reduced points and for each point $x\in X$, the local ring $\mathcal{O}_{X,x}$ is regular. Thus $\mathrm{rt}(X)=1$.
\end{Example}
\begin{Example}\label{reducedclosesubchemedim2}
	Let $X$ be a reduced closed subscheme of the $2$-dimensional affine space $\mathbb{A}_k^2=\spec{k[x,y]}$ over a field $k$. Then $X=V(I)\subseteq \mathbb{A}_k^2$ defined by a radical ideal $I\subseteq R=k[x,y]$. We have 
	\[\mathrm{rt}(X)=\mathrm{rt}(R/I)=\mathrm{rt}_R(I)=\max\{ \mathrm{rt}_{R_{\fm}}(I_{\fm})\ \colon\ \fm \ \mbox {is maximal ideal}, \  I\subseteq \fm \}.\]
	Since $I$ is radical and  in $R$ all non-maximal prime ideals of $R$ are principal, it follows that  $I=(f)\cap \fm_1\cap \cdots\cap\fm_s$ with $(f)\nsubseteq \fm_i$, where $(f)=(f_1)\cap\cdots\cap (f_m)$ is a prime ideal  and $\fm_i$ are maximal ideals. It turns out that $I_{\fm}=\fm R_{\fm}$ and hence $\mathrm{rt}_{R_{\fm}}(I_{\fm})=1$ as $\fm$ is a complete intersection. Therefore, $\mathrm{rt}(X)=1$. In particular, the blowup of affine space $\mathbb{A}_k^2$ along a reduced closed subscheme is just a naive blowup that arises from the symmetric algebra.  
\end{Example}
An ideal $I$ in a ring $R$ is called \textit{strict almost complete intersection} if $I$ is minimally generated by $h+1$ elements where $h$ is the height of $I$.  
\begin{Proposition}\label{a.c.i}
	Let $X=V(I)$ be an equidimensional closed subscheme of dimension one in $\AA_k^n$ defined by a strict almost complete intersection ideal $I\subseteq R=k[x_1,\ldots,x_n]$. Then $\rt(X)=1$ if and only if $\OO_{X,x}$ is a complete intersection for any maximal point $x\in X$. 
\end{Proposition}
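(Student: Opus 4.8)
The plan is to pass to local rings, recognize the hypothesis as a condition of type $G_\infty$, and then invoke the linear-type criterion coming from the theory of approximation complexes. By Proposition~\ref{basechange}(b) and the definition of $\rt(X)$ one has $\rt(X)=\max\{\rt_{R_\fp}(I_\fp)\colon \fp\in V(I)\}$, so (since $\rt(X)\ge 1$) the statement amounts to: $\rt_{R_\fp}(I_\fp)=1$ for all $\fp\in V(I)$. As $X$ is equidimensional of dimension one, the primes in $V(I)$ are the finitely many minimal primes $\fq_1,\dots,\fq_r$ of $I$, all of height $n-1$ — these are exactly the maximal points $\eta_i$ of $X$, with $\OO_{X,\eta_i}=R_{\fq_i}/I_{\fq_i}$ — together with the maximal ideals $\fm\supseteq I$, of height $n$. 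At each such $\fp$ the ring $R_\fp$ is regular local, $\hht I_\fp=n-1$, and $\mu(I_\fp)\le\mu(I)=n$; hence $I_\fp$ is a complete intersection ($\mu(I_\fp)=n-1$) or a strict almost complete intersection ($\mu(I_\fp)=n$), and $\OO_{X,\eta_i}$ is a complete intersection precisely when $\mu(I_{\fq_i})=n-1$. Thus the hypothesis of the proposition is exactly that $I$ satisfies $G_\infty$, i.e. $\mu(I_\fp)\le\hht\fp$ for every $\fp\in V(I)$ (the inequality being automatic at the maximal ideals, where $\mu(I_\fm)\le n=\hht\fm$).

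For the implication from $\rt(X)=1$ to $\OO_{X,x}$ being a complete intersection at the maximal points I would argue by contraposition. If $\OO_{X,\eta_i}$ is not a complete intersection then $\mu(I_{\fq_i})=n>n-1=\dim R_{\fq_i}$. Were $I_{\fq_i}$ of linear type, its special fibre $\Rees_{R_{\fq_i}}(I_{\fq_i})\otimes_{R_{\fq_i}}\kappa(\fq_i)=\sym_{\kappa(\fq_i)}(I_{\fq_i}/\fq_iI_{\fq_i})$ would be a polynomial ring in $\mu(I_{\fq_i})$ variables, so the analytic spread would satisfy $\ell(I_{\fq_i})=\mu(I_{\fq_i})$, contradicting the standard bound $\ell(I_{\fq_i})\le\dim R_{\fq_i}$. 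Hence $\rt_{R_{\fq_i}}(I_{\fq_i})>1$ and $\rt(X)>1$.

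For the converse, assume $G_\infty$ and fix $\fp\in V(I)$. If $I_\fp$ is a complete intersection, it is of linear type, so $\rt_{R_\fp}(I_\fp)=1$; this settles every $\fq_i$ (where $\mu(I_{\fq_i})=n-1$) and every maximal ideal at which $I$ is locally a complete intersection. So let $\fm\supseteq I$ be a maximal ideal with $I_\fm$ a strict almost complete intersection. Then $R_\fm$ is a regular (hence Cohen-Macaulay) local ring of dimension $n$ with $\dim R_\fm/I_\fm=1$, and $I_\fm$ satisfies $G_\infty$. Choose $a_1,\dots,a_{n-1}\in I_\fm$ a regular sequence which is part of a minimal generating set (possible by prime avoidance), and write $I_\fm=J+(b)$ with $J=(a_1,\dots,a_{n-1})$. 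On the minimal generating set $a_1,\dots,a_{n-1},b$ the Koszul homology of $I_\fm$ vanishes in homological degrees $\ge 2$, while $H_1\cong (J:b)/J$ fits into the exact sequence
\[0\to H_1\to R_\fm/J\xrightarrow{b}R_\fm/J\to R_\fm/I_\fm\to 0 .\]
Since $R_\fm/J$ is Cohen-Macaulay of dimension one, two applications of the depth lemma give $\depth H_1\ge 1$; hence every sliding-depth inequality $\depth H_i\ge\dim R_\fm-\mu(I_\fm)+i$, which here reads $\depth H_i\ge i$ as $\dim R_\fm=\mu(I_\fm)=n$, holds, i.e. $I_\fm$ has sliding depth. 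By the theory of approximation complexes~\cite{H.S.V} — an ideal satisfying $G_\infty$ and sliding depth over a Cohen-Macaulay local ring is of linear type — we obtain $\rt_{R_\fm}(I_\fm)=1$. Taking the maximum over $\fp\in V(I)$ gives $\rt(X)=1$.

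I expect the last case of the converse to be the crux: one must produce enough depth to invoke a linear-type criterion even though $R_\fm/I_\fm$ need not be Cohen-Macaulay. This is precisely where both hypotheses are used — the strict-almost-complete-intersection assumption concentrates the Koszul homology in degrees $0$ and $1$, and the dimension-one assumption forces $\depth H_1\ge 1$ for free, because the complete-intersection subideal $J$ is Cohen-Macaulay of dimension one; together they upgrade the necessary condition $G_\infty$ into a sufficient one. The only substantial external input is the acyclicity criterion for approximation complexes; the rest is the depth lemma, elementary dimension theory in $R=k[x_1,\dots,x_n]$, and the bound $\ell(\cdot)\le\dim(\cdot)$.
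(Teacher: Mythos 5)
Your proof is correct and follows essentially the same route as the paper: reduce to the local rings at the minimal primes and at the closed points, recognize the hypothesis as the condition $G_\infty$ (that $\mu(I_\fp)\le\hht\fp$ for all $\fp\in V(I)$), and invoke the linear-type criterion for almost complete intersections over a Cohen--Macaulay local ring. The only difference is one of packaging: where the paper's proof simply cites \cite[Proposition 2.4]{H.S.V} for the necessity of $G_\infty$ and \cite[Theorem 9.1]{H.S.V} for the sufficiency, you supply proofs of both steps (via the analytic-spread bound $\ell(I_{\fq})\le\dim R_{\fq}$ in one direction, and via the sliding-depth computation for $H_1\cong(J:b)/J$ together with the acyclicity criterion for approximation complexes in the other), correctly isolating the role of the dimension-one hypothesis in forcing sliding depth.
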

\begin{proof}
	Let  $\rt(X)=1$. Then $\rt(\OO_{X,x})=\rt_{R_{\fp_x}}(I_{\fp_x})=1$ for any point $x\in X$. Assume that $x\in X$ is a maximal point which correspond to the minimal prime ideal $\fp_x$ of $I$.   By assumption and~\cite[Proposition 2.4]{H.S.V}, one has
	\[n-1\leq  \grade(I_{\fp_x})\leq\hht(I_{\fp_x})\leq \mu(I_{\fp_x})\leq \hht(\fp_x)=n-1.  \]
	where $\grade(-)$, $\hht(-)$ and $\mu(-)$ are the length of the longest regular sequence of elements in the ideal, height of the ideal and the minimal number of generators of the ideal, respectively. Hence $\hht(I_{\fp_x})=\mu(I_{\fp_x})=n-1$, which shows that $\OO_{X,x}$ is a complete intersection at maximal points of $X$. 
	
	For the inverse, it is enough to show that $\rt_{R_{\fp_x}}(I_{\fp_x})=1$ for all closed points of $x\in X$. Since the prime ideals containing $I$ are maximal ideals and its minimal primes, the latter all of height $n-1$, the assertion immediately follows from Theorem 9.1 in~\cite{H.S.V}. 
\end{proof}
\begin{Example}\label{spacecurve}
Let $X\subseteq \AA_k^3$ be an irreducible space curve. Denote by $\fp\subseteq R=k[x,y,z]$ the defining ideal of $X$ which is a codimension $2$ perfect ideal of projective dimension $1$. If $\fp$ is complete intersection or almost complete intersection, then $\rt_R(\fp)=1$. If $\fp$ is generated by at least four generators, then $\rt_R(\fp)\geq 2$. We claim that $\rt_R(\fp)>2$. If   $\rt_r(\fp)\leq 2$, since
$\rt_R(\fp)=\min \{ r\geq 1 \ | \ \Ho_{1}(R,\Rees_{R}(\fp),R )_n=0\ \hbox{for all }\ n\geq r+1  \}$, it follows that $\Ho_{1}(R,\Rees_{R}(\fp),R )_n=0$ for all $n\geq 3$. By~\cite[Remark 2.9]{H.S.V}, one has  $\Ho_{1}(R,\Rees_{R}(\fp),R )_2=0$ which implies that $\rt(\fp)=1$, a contradiction. Therefore, either $\rt_R(X)=1$ or else $\rt(X)=\mathrm{rt}_R(\fp)\geq 3$. In particular, there do not exist irreducible space curves in $\AA_k^3$ with relation type $2$. It would be interesting to characterize irreducible space curves $X$ of embedding dimension $3$ such that $\rt(X)=3$. 
\end{Example}
Let $X\subseteq \PP_k^n$ be a projective variety. We define the relation type of $X$ as the relation type of affine cone over $X$, that is $\mathrm{rt}(X):=\mathrm{rt}(C(X))$, where $C(X)$ is the affine cone over $X$.
\begin{Proposition}\label{points}
	Let $X$ be a finite set of $s$ points in the projective plane $\PP_k^2$ over a field $k$.  Then either $\mathrm{rt}(X)=1$ or else $\mathrm{rt}(X)\geq 3$. 
\end{Proposition}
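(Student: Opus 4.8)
The plan is to show that $\mathrm{rt}(X)$ can never equal $2$; since $\mathrm{rt}(X)\ge 1$ always, this yields the asserted dichotomy. Write $R=k[x_0,x_1,x_2]$ and let $I=I_X\subseteq R$ be the (saturated) homogeneous ideal of $X$, so that by definition $\mathrm{rt}(X)=\mathrm{rt}(C(X))=\mathrm{rt}_R(I)$. The first step is to pin down the homological type of $I$: because $X$ is a finite set of points in $\PP_k^2$, the homogeneous coordinate ring $R/I$ is Cohen-Macaulay of Krull dimension one, hence $\depth_R(R/I)=1$, so $\pd_R(R/I)=2$ by Auslander-Buchsbaum, and $\grade(I)=\hht(I)=2$. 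Thus $I$ is a codimension $2$ perfect ideal of projective dimension $1$, precisely the class of ideals occurring in Example~\ref{spacecurve}; the only difference is that there the ideal is prime, a property that plays no role in the argument below.

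Granting this, I would simply rerun the argument of Example~\ref{spacecurve}. Suppose, for contradiction, that $\mathrm{rt}(X)=\mathrm{rt}_R(I)\le 2$. Combining Definition~\ref{defRT} with~\ref{VilanovaApproch} (as spelled out in Example~\ref{spacecurve}), $\mathrm{rt}_R(I)$ is the least $r\ge 1$ with $\Ho_1(R,\Rees_R(I),R)_n=0$ for all $n\ge r+1$; hence the assumption forces $\Ho_1(R,\Rees_R(I),R)_n=0$ for every $n\ge 3$. On the other hand, since $I$ is a codimension $2$ perfect ideal of projective dimension $1$, \cite[Remark 2.9]{H.S.V} gives the vanishing of the degree-two component $\Ho_1(R,\Rees_R(I),R)_2=0$ as well. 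Therefore $\Ho_1(R,\Rees_R(I),R)_n=0$ for all $n\ge 2$, which means $\mathrm{rt}_R(I)=1$, contradicting $\mathrm{rt}(X)=2$. Hence $\mathrm{rt}(X)=1$ or $\mathrm{rt}(X)\ge 3$, as claimed.

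The genuine content sits entirely in the vanishing $\Ho_1(R,\Rees_R(I),R)_2=0$ that \cite[Remark 2.9]{H.S.V} provides for every codimension $2$ perfect ideal of projective dimension $1$; this degree-two vanishing is precisely what makes relation type $2$ impossible. Everything else is the homological classification of $I$ carried out in the first paragraph, which uses nothing about $X$ beyond its being a zero-dimensional subscheme of $\PP_k^2$ (so that its homogeneous coordinate ring is automatically arithmetically Cohen-Macaulay), together with the translation of relation type into Andr\'e-Quillen homology recalled in~\ref{VilanovaApproch}. I do not anticipate any obstacle beyond correctly quoting the cited remark; in fact the same argument goes through verbatim for an arbitrary zero-dimensional closed subscheme of $\PP_k^2$, not just for a reduced set of points.
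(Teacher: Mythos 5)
Your argument is correct, and its engine is identical to the paper's: translate $\mathrm{rt}_R(I)$ into the vanishing of the graded pieces $\Ho_1(R,\Rees_R(I),R)_n$ as in~\ref{VilanovaApproch}, and then invoke the degree-two vanishing $\Ho_1(R,\Rees_R(I),R)_2=0$ from \cite[Remark 2.9]{H.S.V} for a codimension two perfect ideal of projective dimension one --- this is precisely the citation the paper uses, both here and in Example~\ref{spacecurve}. Where you differ is in the logical packaging. The paper splits into three cases according to the minimal number of generators of $I(X)$: for a complete intersection or an almost complete intersection it proves $\mathrm{rt}_R(I(X))=1$ outright (the latter via Proposition~\ref{a.c.i}, using that $I(X)$ localizes to a complete intersection at each minimal prime $I(p_i)$), and only in the case of four or more generators does it run the degree-two vanishing argument, using \cite[Proposition 2.4]{H.S.V} to rule out linear type and reach the contradiction. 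You instead observe that the dichotomy is equivalent to the single assertion $\mathrm{rt}(X)\neq 2$ and derive the contradiction directly from the hypothesis $\mathrm{rt}_R(I)=2$, which lets you dispense with the case analysis, with Proposition~\ref{a.c.i}, and with \cite[Proposition 2.4]{H.S.V} altogether; your preliminary reduction (that $R/I$ is Cohen--Macaulay of dimension one, hence $I$ is perfect of codimension two and projective dimension one) is also sound and, as you note, extends to non-reduced zero-dimensional subschemes. The trade-off is only informational: the paper's case split additionally records exactly when $\mathrm{rt}(X)=1$ occurs (namely when $I(X)$ has at most three generators), which your streamlined version does not recover.
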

\begin{proof}
	Let $I(X)=\bigcap_{p\in \mathbb{X}} I(p)$ stand for the defining ideal of $X$ where $I(p)$ is a prime ideal generated by two linear forms in $R=k[x,y,z]$. Then the coordinate ring of $X$ is  $A=R/I(X)$ which is a reduced equidimensional $k$-algebra of dimension $1$. By definition, $\mathrm{rt}(X)=\mathrm{rt}(C(X))=\mathrm{rt}(A)=\mathrm{rt}_R(I(X))$.  If $I(X)$ is a complete intersection,  then $\mathrm{rt}(I(X))=1$. Assume that $I(X)$ is an almost a complete intersection. Since the localization of $I(X)$ at each prime ideal $I(p_i)$ for $i=1,\ldots, s$,  is a complete intersection, by Proposition~\ref{a.c.i}, one has   $\mathrm{rt}_R(I(X))=1$. If $I(X)$ is generated by at least four elements, then $I(X)$ is not of linear type~\cite[Proposition 2.4]{H.S.V}. Thus $\mathrm{rt}(I(X))> 2$. Indeed, if $\rt(I(X))\leq 2$, since
	$\rt_R(I(X))=\min \{ r\geq 1 \ | \ \Ho_{1}(R,\Rees_{R}(I(X)),R )_n=0\ \hbox{for all }\ n\geq r+1  \}$, it follows that $\Ho_{1}(R,\Rees_{R}(I(X)),R )_n=0$ for all $n\geq 3$. By~\cite[Remark 2.9]{H.S.V}, one has  $\Ho_{1}(R,\Rees_{R}(I(X)),R )_2=0$ which implies that $\rt(I(X))=1$, a contradiction. Therefore, $\rt(X)=\mathrm{rt}_R(I(X))\geq 3$. 
\end{proof}
\begin{Example}
	Let $X\subseteq \PP_k^2$ be a set of six points in the projective plane over a field $k$ of characteristic zero such that three points are collinear and the remaining three points are in the general linear position. We show that $\rt(X)=3$. 
	
	 We may assume that the three points are in the line $z=0$ and  $p_1=[a_1:1:0],\, p_2=[a_2:1:0],\, p_3=[a_3:1:0]$ with $a_i\in k$ and  $a_i\neq 0,1,-1$ and by a projective transformation $p_4=[1:0:1],\, p_5=[0:1:1],\, p_6=[0:1:1]$. The ideal $I(X)$ is generated by the following  forms of degree $3$
	\[ xyz,\, x^2z-xz^2,\, y^2z-yz^2,\, x^3-e_1x^2y+e_2xy^2-e_3y^3+e_3yz^2-xz^2,\]
where $e_1:=a_1+a_2+a_3,\, e_2:=a_1a_2+a_1a_3+a_2a_3,\, e_3:=a_1a_2a_3$. The ideal $I(X)$ has the following syzygy matrix
\[
M:=\begin{bmatrix}
x&0&-e_3(y+z)\\
z-y&x-z&e_2y-e_1z\\
0&-y&x-e_1y+z\\
0&0&z
\end{bmatrix}.
\] 
The defining ideal $\Q_1\subseteq S=R[T_1,T_2,T_3,T_4]$ of the symmetric algebra of $I(X)$ is generated by the linear forms 
\[xT_1+(z-y)T_2,\, (x-z)T_2-yT_3,\,(-e_3(y+z))T_1+(e_2y-e_1z)T_2+(x-e_1y+z)T_3+zT_4. \]
Thus the Jacobian dual matrix( see~\cite{vasconcelos1}) $B(M)$ is of the form
\[ 
B(M):=\begin{bmatrix}
T_2&-T_3&-T_2\\
-T_3&e_2T_1-e_3T_2+e_1T_3&e_3T_1+e_1T_2-T_3+T_4\\
T_1&-T_2&T_2
\end{bmatrix}.
\]
Since the height of $\det(B(M))=4-3=1$, by~\cite[Corollary 8.2.9]{Vascon} one has $\Q=(\Q_1,\det(B(M)))$ which proves that $\rt(X)=3$.  
\end{Example}
In this regard, it is natural to pose the problem of 	\textit{characterizing configuration of points $X$ in the projective plane such that  $\rt(X)=3$}.  

\begin{Example}
	Let $X=\{p_1,\ldots,p_g,q_1,\ldots,q_g\}$ be a set of $2g$ distinct points in the projective line $\mathbb{P}_k^1$ over an algebraically closed field $k$ of characteristic zero with $g\geq 3$. By a projective transformation, we may assume that $p_i=[1:a_i]$ and $q_i=[1:b_i]$ where $a_i,b_i\in k$. We decompose $X=X_1\cup\ldots\cup X_g$, where $X_i=\{p_i,q_i\}$ for $i=1,\ldots,g$.  Denote by $Q_i=I(X_i) \subseteq R=k[x,y]$, the defining ideal of $X_i$ which is a principal ideal generated by homogeneous polynomial $Q_i=(a_ix-y)(b_ix-y)$. Setting 
	$f_i:=\prod_{j=1,i\neq j}^{g}Q_j.$
	
	Let 
	$\varphi: \mathbb{P}_k^1\dashrightarrow \mathbb{P}_k^{g-1}$ be a rational map defining by the polynomials $f_1,\ldots, f_g$. By construction $\varphi(p_i)=\varphi(q_i)$. The image of $\varphi$ is called \textit{canonical nodal curve }of genus $g\geq 3$.  For small $g$ it is possible to given an explicit geometric description of the canonical  nodal curves of genus $g$. Thus, for genus $3$ the canonical  nodal curve  is a quartic curve in $\mathbb{P}_k^2$, while for genus $4$ the canonical nodal curves is intersections of a quadric and a cubics in $\mathbb{P}_k^3$. 
	
	Let $I\subseteq R$ be an ideal generated by $f_1,\ldots,f_g$. Then the ideal $I$ is a Hilbert-Burch ideal with the minimal free resolution 
	\[0\lar R^{g-1}(-2g)\stackrel{M}\lar R^{g}(-(2g-2))\lar I\lar 0, \]
	where 
	\[M:=\begin{bmatrix}
	Q_1& 0&0&\cdots&0\\
	0&Q_2&0&\cdots&0\\
	0&0&Q_3&\cdots&0\\
	\vdots&\vdots&\vdots&\ddots&\vdots\\
	0&0&0&\cdots&Q_{g-1}\\
	-Q_g&-Q_g&-Q_g&\cdots&-Q_g
	\end{bmatrix}
	.\]
\end{Example}
Let $X=V(I)\subseteq \PP_k^{g-1}$. If $g=3$, then the defining ideal of the Rees algebra of $I$  generated by the defining ideal of the symmetric algebra of $I$, two moving conics and the implicit equation of degree $4$ which is a quartic curve in $\mathbb{P}_k^2$ with three nodes~\cite[Theorem 2.12]{DC}. If $g=4$, then the defining ideal of the Rees algebra of $I$  is generated by the defining ideal of the symmetric algebra of $I$, four moving conics and two implicit equations of degree $2$ and $3$~\cite[Example 5.2.2]{Hartshorne}. Therefore, 
\begin{equation*}
\rt(X)=\mathrm{rt}_R(I)=\left\{
\begin{array}{rl}
4 & \mbox{if}\ g=3\\
3 &  \mbox{if}\ g=4.
\end{array}
\right.
\end{equation*}
We conjecture that $\mathrm{rt}(X)=2$ for $g\geq 5$. In particular, the defining ideal of canonical nodal curve of genus $g\geq 5$ is a quadratic variety of degree $2g-2$. 
\begin{Remark}
For a given positive number $r\geq 1$. It would be interesting to classify affine and projective varieties of relation type $r$. What are the geometric properties of a finite set of points in the projective plane or an irreducible space curve of embedding dimension $3$ that the relation type is not $2$? Is there a geometric meaning to the relation type of affine and projective varieties?  
\end{Remark}
\subsection{Complex analytic spaces}
Let us recall the basic geometric objects in complex analytic geometry, namely complex space. We intend to give only basic definition that we need in the following. For more details see for example~\cite{GLS} and~\cite[Appendix by B. Moonen]{HKU}. 

Let $D\subseteq \mathbb{C}^n$ be an open subset. Denote by $\mathcal{O}_{\CC^n}$ the sheaf of holomorphic functions. Then the sheaf of holomorphic functions on $D$ is defined by $\OO_D=\imath^{-1}\OO_{\CC^n}$, where $\imath:D\hookrightarrow \CC^n$ is the canonical map. Let $\OO_{D,x}$ stand for the stalk of $\OO_D$ at the point $x\in D$. We  first need to recall a complex model space.

Let $D\subseteq \CC^n$ be an open subset and let  $\mathcal{I}\subseteq \OO_D$ be an ideal sheaf of finite type. Then $\OO_D/\mathcal{I}$ is a sheaf of rings on $D$.
Setting  
\[ V(\mathcal{I}):=\{ x\in D\  \colon\  \OO_{D,x}\neq \mathcal{I}_x \}=\{ x\in D\  \colon\  \left (\OO_{D}/\mathcal{I}\right )_x\neq 0 \}=\mathrm{supp}(\OO_{D}/\mathcal{I}).\]
 Recalling that $V(\mathcal{I})$  is an analytic set in $D$, for every $x\in D$ there an open neighborhood $U$ and  holomorphic functions $f_1,\ldots,f_m$   such that 
\[V(\mathcal{I})\cap U=V(f_1,\ldots,f_m)\cap U. \] 
A \textit{complex model space} is an analytic ringed space  $(X,\OO_X)$, where   $X:=V(\mathcal{I})$ and  $\OO_X:=\left(\OO_D/\mathcal{I}\right)|_X$. \color{black} A morphism of complex  model  space is just a morphism of analytic ringed spaces. 

An analytic ringed space $(X,\OO_X)$ is called a \textit{complex analytic  space} or a \textit{complex analytic variety}, if $X$ is a Hausdorff space and if every point of $X$ has an open neighborhood $U$ such  that the open analytic ringed subspace $(U,\OO_U)$ of $(X,\OO_X)$  is isomorphic to a complex model space. 

There is a connection between analytic $\CC$-algebras and complex analytic spaces. In fact, every stalk of the structure sheaf $\OO_X$ is an analytic $\CC$-algebra and conversely, every analytic $\CC$-algebra can be obtained as the stalk of a complex analytic space. More precisely, let $(X,\OO_X)$ be a complex analytic space and $x\in X$. If follows from the definition that there exist $f_1,\ldots,f_m\in \CC\{\xx\}$ such that $\OO_{X,x}\cong\OO_{\CC^n,\mathbf{0}}/\mathcal{I}_{\mathbf{0}}\cong \CC\{\xx\}/(f_1,\ldots,f_m)$. In this case, $\xx=(x_1,\ldots,x_n)$ and $f_1,\ldots,f_m$ is called \textit{local equation} and \textit{local coordinate} for $X$ at the point $x$, respectively. On the other hand, let   $A=\OO_{\CC^n,\mathbf{0}}/J$ be a given analytic $\CC$-algebra. Since $\OO_{\CC^n,\mathbf{0}}$ is Noetherian, there exists a 
neighborhood $U$ of $\mathbf{0}\in \CC^n$ and  holomorphic functions $f_1,\ldots,f_m\in \OO_U$, whose germs $(f_1)_x,\ldots,(f_m)_x$ generate $J$. Then $X:=V(f_1,\ldots,f_m)$ is a desired complex model space. 

A \textit{complex space germ} or \textit{singularity} is a tuple $(X,x)$ with $X$ a complex analytic space and $x\in X$. A morphism $f\colon (X,x)\rar (Y,y)$ of  complex space germs  is a morphism $f\colon X\rar Y$ of complex analytic spaces such that $f(x)=y$. The complex space germs with their morphism form a category which we denote by $\mathcal{C}$. If $(X,x)$ be a complex space germ and $U$ is any neighborhood of $x$, then $(U,x)=(X,x)$ up to  isomorphism in $\mathcal{C}$.  In this case we say  $U$ is a \textit{representative of the germ} $(X,x)$. The category of complex space germs $\mathcal{C}$ is anti-equivalent to the category of analytic $\CC$-algebras~\cite[Theorem 3.3.3]{Her}. 

\begin{Definition}\label{spacegerms} 
Let $(X,x)$ be a complex space germ. We define \textit{the relation type of $(X,x)$} as $\rt(X)=\rt(\OO_{X,x})$. By Theorem~\ref{T1}, it is well-defined and an invariant. 
\end{Definition}
Let $(X,\OO_X)$ be a complex analytic space. We can define the relation type of $X$ as $\rt(X):=\sup\{\rt(\OO_{X,x})\ : \ x\in X \}$ which is well-defined as $\OO_{X,x}$ is an analytic $\CC$-algebra and it is invariant of complex analytic spaces. 

Let $(X,\OO_X)$ be a complex analytic space and $x\in X$ a point.  The dimension of $X$ at the point $x$, $\dim_x X$, is defined by the Krull dimension of the local ring $\OO_{X,x}$. Also the embedding dimension of $X$ at the point $x$, $\mathrm{edim}_x X$,  is defined by the $\CC$-vector space dimension of $\fm_x/\fm^2_x$, where $\fm_x$ is the maximal ideal of $\OO_{X,x}$. We say $X$ is \textit{regular at point $x$}, if $\dim_x X=\mathrm{edim}_x X$. Otherwise, we say $X$ is\textit{ singular at the point $x$}. Let $\mathrm{Sing}(X)$ stand for the set of all singular point of $X$ which so-called \textit{singular space} or\textit{ singular locus} of $X$. By~\cite[Corollary 1.111]{GLS}, the singular space of $X$ is closed analytic set in $X$. 

\begin{Example}
Let $(X,\OO_X)$ be a complex analytic space. Then
\[\rt(X)=\sup\{\rt(\OO_{X,x}) \ \colon   \ x\in \mathrm{Sing}(X) \}. \] 
Therefore, any regular complex space $(X,\OO_X)$ is of the relation type $1$. 
\end{Example}
Let $X$ be a scheme of finite type over $\mathbb{C}$. We can associate to $X$ a complex analytic space $X_h$~\cite[Appendix B]{Hartshorne}. The following results prove  that the relation type is an analytic invariant of schemes of finite type over $\CC$. 
\begin{Proposition}\label{analyticinvariant}
Let $(X,\OO_X)$ be a scheme of finite type over $\mathbb{C}$. Then $\rt(X)=\rt(X_h)$. 
\end{Proposition}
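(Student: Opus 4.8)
The plan is to reduce the claim to the statement that the relation type is a local invariant (Proposition~\ref{basechange}(b)) together with the fact that for each point $x\in X$ the local ring $\OO_{X,x}$ and the corresponding stalk $\OO_{X_h,x}$ of the analytic space are related by a flat local homomorphism that happens to be the completion-compatible analytification map. First I would recall that a scheme $X$ of finite type over $\CC$ is quasi-compact, so by definition $\rt(X)=\max\{\rt(\OO_{X,x}) : x\in X\}$; likewise the underlying set of $X_h$ consists of the closed points of $X$, and for a closed point $x$ one has the classical comparison $\widehat{\OO_{X,x}}\cong\widehat{\OO_{X_h,x}}$ of $\fm_x$-adic completions, both being isomorphic to a quotient of a formal power series ring $\CC[[x_1,\dots,x_n]]$ obtained from the same local equations $f_1,\dots,f_m\in\CC\{\xx\}\subseteq\CC[[\xx]]$.

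Next I would use Proposition~\ref{basechange}(c): since the defining ideal of $X$ at $x$ is contained in the Jacobson radical (the maximal ideal) of the local ring in question, passing to the $\fm_x$-adic completion does not change the relation type of the ideal, hence does not change $\rt(\OO_{X,x})$. Applying this on both sides gives
\[
\rt(\OO_{X,x}) \;=\; \rt\bigl(\widehat{\OO_{X,x}}\bigr) \;=\; \rt\bigl(\widehat{\OO_{X_h,x}}\bigr) \;=\; \rt(\OO_{X_h,x}),
\]
where the outer equalities are Proposition~\ref{basechange}(c) and the middle one is the comparison isomorphism of completions just recalled, combined with the fact (Theorem~\ref{T1}) that $\rt$ is well-defined on the isomorphism class of the formal $\CC$-algebra $\widehat{\OO_{X,x}}$. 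Taking the maximum over all closed points $x$ of $X$ — equivalently over all points of $X_h$ — then yields $\rt(X)=\rt(X_h)$.

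There is one gap to address carefully: the scheme $X$ may have non-closed points, and a priori the maximum defining $\rt(X)$ could be attained at a non-closed point. The hard part is therefore to show that only closed points matter, i.e. $\rt(X)=\max\{\rt(\OO_{X,x}):x\ \text{closed}\}$. This follows because for a non-closed point $\xi$ the local ring $\OO_{X,\xi}$ is a localization of $\OO_{X,x}$ for any closed specialization $x$ of $\xi$, and by Proposition~\ref{basechange}(b) localization cannot increase the relation type — explicitly, $\rt(\OO_{X,\xi})=\rt((\OO_{X,x})_{\fp})\le\rt(\OO_{X,x})$ where $\fp$ is the prime of $\OO_{X,x}$ corresponding to $\xi$. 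Hence the supremum over all points equals the supremum over closed points, and the reduction above applies. I expect this last bookkeeping — making precise that analytification induces a bijection on closed points compatible with the comparison of completions, and citing the relevant statement from~\cite[Appendix B]{Hartshorne} — to be the only place requiring genuine care; everything else is a direct assembly of Propositions~\ref{basechange} and~\ref{reproved} and Theorem~\ref{T1}.
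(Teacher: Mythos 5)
Your proposal is correct and follows essentially the same route as the paper: identify the $\fm_x$-adic completions of $\OO_{X,x}$ and $\OO_{X_h,x}$ with the same quotient $\CC[[\xx]]/I\CC[[\xx]]$ and invoke Proposition~\ref{basechange}(c) (stability of relation type under completion) together with the invariance results to conclude pointwise, then take the maximum. The only difference is that you explicitly justify the reduction to closed points via Proposition~\ref{basechange}(b), a step the paper's proof leaves implicit; that is a legitimate and welcome addition rather than a divergence.
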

\begin{proof}
For a point $x\in X$, one has $\OO_{X,x}\cong(\CC[\xx]/I)_{\fp_x}$ and $\OO_{{X_h},x}\cong \CC\{\xx\}/I\CC\{\xx \}$. The completion of both local rings with respect to their corresponding maximal ideals are isomorphic to $\CC[[\xx]]/I\CC[[\xx]]$.  Hence Lemma~\ref{basechange}(c) implies that $\rt(X)=\rt(X_h)$. 
\end{proof}

\subsection{Algebroid varieties and  formal germs}
Let $A$ be a complete local Noetherian ring with coefficient field $k$. Then $X=\spec{A}$ is called \textit{algebroid variety}. The local study of $k$-scheme of finite type can be reduced to the study of algebroid varieties. 

Let $X$ be a scheme locally of finite type over a filed $k$.  The local ring $\mathcal{O}_{X,x}$ of $X$ at the point $x\in X$ is equipped with the $\fm_x$-adic topology whose basis of neighborhoods  of $0$ is given by the powers $\fm_x^n$ of maximal ideal $\fm_x$ of $\mathcal{O}_{X,x}$. The induced completion $\widehat{\mathcal{O}}_{X,x}$ is called the \textit{complete local ring} of $X$ at $x$. The scheme defined by $\widehat{\mathcal{O}}_{X,x}$ is called a \textit{formal germs} of $X$ at the point $x$, ( or the \textit{algebroid variety} associated to $X$ at the point $x\in X$) denoted by $(\widehat{X},x)$.  

The natural ring homomorphism $\mathcal{O}_{X,x}\rar \widehat{\mathcal{O}}_{X,x}$ defines a morphism $(\widehat{X},x)\rar (X,x)$ in the category of schemes from formal germ to the germs of $X$ at the point $x$. 

A map $f\colon (\widehat{X},x)\rar (\widehat{Y},y)$ between two formal germs is defined as a local algebra homomorphism $\widehat{f}\colon \widehat{\mathcal{O}}_{Y,y}\rar \widehat{\mathcal{O}}_{X,x}$. It is also called  \textit{formal map} between $X$ and $Y$ at $x$. Two $k$-schemes locally of finite type $X$ and $Y$ are formally isomorphism at a point $x\in X$, respectively $y\in Y$, if the complete local rings $\widehat{\mathcal{O}}_{Y,y}$ and $\widehat{\mathcal{O}}_{X,x}$ are isomorphic as $k$-algebras. 

The formal germ $(\widehat{\mathbb{A}_k^n},x)$ of  the affine space of dimension $n$ at the point $x$ is given by the formal power series ring $\widehat{\OO}_{\mathbb{A}_k^n,x}\cong k[[x_1-a_1,\ldots,x_n-a_n]]$. If $X\subseteq \mathbb{A}_k^n$ be a closed subscheme defined by the ideal $I\subseteq k[x_1,\ldots,x_n]$, then the formal germ is  $$\widehat{\OO}_{X,x}=\widehat{\OO}_{\mathbb{A}_k^n,x}/\widehat{I}\cong k[[x_1-a_1,\ldots,x_n-a_n]]/\widehat{I},$$
where $\widehat{I}=I\widehat{\OO}_{\mathbb{A}_k^n,x}$ denotes the extension of $I$ to $\widehat{\OO}_{\mathbb{A}_k^n,x}$. 
\begin{Definition}\label{algebroid}
Let $X=\spec{A}$ be an algebroid variety. We  define the relation type of $X$ as $\mathrm{rt}(X)=\mathrm{rt}(A)$ which is well-defined and an invariant by Theorem~\ref{T1}. In particular, the relation type is an invariant of formal germs. 
\end{Definition}
Let $X$ be a $k$-scheme of finite type. Then 
\begin{eqnarray}
\nonumber \rt(X)=\max\{\rt(\widehat{\OO}_{X,x})\  \colon  \  x\in X \}. 
\end{eqnarray}
This shows that the relation type is a formal invariant. 
\begin{Example}
	The map $f\colon \mathbb{A}_k^1\rar \mathbb{A}_k^2$ given by $t\mapsto(t^2-1, t(t^2-1))$ is a regular morphism which induces a birational isomorphism onto the curve $X\subseteq \mathbb{A}_k^2$ defined by the polynomial $f=y^2-x^2-x^3$. The inverse $(x,y)\mapsto y/x$ is a rational map  on $X$ and regular on $X\setminus\{0\}$. The germ $(X,0 )$  is not isomorphic to the germ $(Y,0)$ of the union $Y$ of the two diagonals in $\mathbb{A}_k^2$ defined by the polynomial $g=x^2-y^2$. The formal germs $(\widehat{X},0)$ and $(\widehat{Y},0)$ are isomorphism via the map $\varphi\colon k[[x,y]]\rar k[[x,y]]$ given by $(x,y)\mapsto(x\sqrt{1+x},y)$. Let $I(f)=(f,\partial f/\partial x,\partial f/\partial y)=(f,x+x^2,y)$ and $I(g)=(g,x,y)$ be the Jacobian ideals of $f$ and $g$ in the ring  $R=k[x,y]$, respectively. By using the isomorphism $\varphi$, we have the $k$-algebra isomorphism  
	\[ k[[x,y]]/ I(f)k[[x,y]]\cong k[[x,y]]/I(g)k[[x,y]]. \]
	Thus 
	$$\rt_R(I(f))=\rt(R/I(f))=\rt(R/I(g))=\rt_R(I(g))=1.$$ 
	We aim  to pursue the relation type of the Jacobian ideal  of a hypersurface singularity in future work.  
\end{Example}
We close with the following remark. 
\begin{Remark}
Let $X$ be normal crossing $k$-scheme of finite type. Then for all point $x\in X$, the formal germ $(\widehat{X}, x)$ is isomorphic to the formal germ $(\widehat{Y}, \mathbf{0})$, where $Y$ is a scheme defined locally at $\mathbf{0}$ in $\AA_k^n$, up to isomorphism, by a monomial ideal in $R=k[x_1,\ldots,x_n]$. Therefore, 
\begin{eqnarray}
\nonumber \rt(X)&=&\max\{\rt(\widehat{X}, x)\ | \ x\in X \}\\
\nonumber &=& \max \{\rt_{\widehat{\mathcal{O}}_{\AA_k^n,\mathbf{0}}}(\widehat{I_x}) \ | \ x\in X  ,\, \  (\widehat{X},x)\cong (\widehat{V(I_x)},0) \},
\end{eqnarray}
where $I_x\subseteq R$ is a monomial ideal that corresponds to the normal crossing point $x\in X$. Therefore, the problem of characterizing the relation type of normal crossing $k$-scheme of finite type is reduced to characterizing the relation type of monomial ideals.
\end{Remark}

\end{document}